\newtheorem{thm}{Theorem}[section]
\newtheorem{lem}[thm]{Lemma}
\newtheorem{cor}[thm]{Corollary}
\newtheorem{prop}[thm]{Proposition}
\newtheorem{conj}[thm]{Conjecture}
\theoremstyle{definition}
\newtheorem{rem}[thm]{Remark}
\newtheorem{rems}[thm]{Remarks}
\newtheorem{defn}[thm]{Definition}
\renewcommand{\theenumi}{(\roman{enumi})}
\def\Z{\mathbf{Z}}
\def\Q{\mathbf{Q}}
\def\R{\mathbf{R}}
\def\C{\mathbf{C}}
\def\Y{\mathbf{Y}}
\def\bN{\mathbf{N}}
\def\Zp{\Z_p}
\def\Zl{\Z_\ell}
\def\Ql{\Q_ \ell}
\def\O{\mathcal{O}}
\def\E{\mathcal{E}}
\def\I{\mathcal{I}^{\mathrm{new}}}
\def\J{\mathcal{I}^{\mathrm{old}}}
\def\N{\mathcal{N}}
\def\cF{\mathcal{F}}
\def\D{\mathcal{D}}
\def\T{\mathcal{T}}
\def\ld{\mathcal{h}}
\def\rd{\mathcal{i}}
\def\p{\mathfrak{p}}
\def\Gal{\mathrm{Gal}}
\def\ord{\mathrm{ord}}
\def\tr{\mathrm{tr}}
\def\f{\mathrm{f}}
\def\Pic{\mathrm{Pic}}
\def\sign{\mathrm{sign}}
\def\Frob{\mathrm{Fr}}
\def\too{\longrightarrow}
\def\map#1{\;\xrightarrow{#1}\;}
\def\isom{\xrightarrow{\sim}}
\def\hookto{\hookrightarrow}
\def\onto{\twoheadrightarrow}
\def\bmu{\boldsymbol{\mu}}
\def\phifs{\phi^{\mathrm{fs}}}
\def\ksys{\boldsymbol{\kappa}}
\def\projI#1#2{{\ld#2\rd_{#1}^{\mathrm{new}}}}
\def\bR#1#2{S_{#1,#2}}
\def\symm{\mathfrak{S}}
\def\pile#1#2{\genfrac{}{}{0pt}{1}{#1}{#2}}
\def\gr#1#2{[\;\cdot\;]_{#1}^{#2}}
\def\grx#1#2#3{[#3]_{#1}^{#2}}
\def\In{\I_n}
\def\Jn{\J_n}
\def\KS{\mathbf{KS}}
\def\preKS{\mathbf{PKS}}
\title{Refined class number formulas and Kolyvagin systems}
\author{Barry Mazur}
\email{mazur@math.harvard.edu}
\address{Department of Mathematics, 
Harvard University,
Cambridge, MA 02138, 
USA}
\author{Karl Rubin}
\email{krubin@math.uci.edu}
\address{Department of Mathematics, 
UC Irvine,
Irvine, CA 92697, 
USA}
\thanks{This material is based upon work supported by the 
National Science Foundation under grants DMS-0700580 and DMS-0757807.}
\begin{document}

\classification{11R42,11R29,11R27}

\begin{abstract}
We use the theory of Kolyvagin systems to prove (most of) a 
refined class number formula conjectured by Darmon.  We show that for every 
odd prime $p$, each side of Darmon's conjectured formula (indexed by 
positive integers $n$) is ``almost'' a $p$-adic Kolyvagin system as $n$ 
varies.  Using the fact that the space of Kolyvagin systems is free of rank 
one over $\mathbf{Z}_p$, we show that Darmon's formula for arbitrary $n$ 
follows from the case $n=1$, which in turn follows from classical formulas.
\end{abstract}

\maketitle

\section{Introduction}

In this paper we use the theory of Kolyvagin systems to 
prove (most of) a conjecture of Darmon from \cite{darmon}.

In \cite[Conjecture 4.1]{gross}, inspired by work of the first author and 
Tate \cite{mazurtate}, and of Hayes \cite{hayes}, Gross conjectured a 
``refined class number formula'' for abelian extensions $K/k$ of global fields.  
Attached to this extension (and some chosen auxiliary data) 
there is a generalized Stickelberger element $\theta_{K/k} \in \Z[G]$, where $G := \Gal(K/k)$, 
with the property that for every complex-valued character $\chi$ of $G$, 
$\chi(\theta_{K/k})$ is essentially $L(K/k,\chi,0)$ (modified by the chosen auxiliary data). 
Gross' conjectural formula is a congruence for $\theta_{K/k}$, modulo a certain specified 
power of the augmentation ideal of $\Z[G]$, in terms of a regulator that Gross defined.

In a very special case, Darmon formulated an analogue of Gross' conjecture involving 
first derivatives of $L$-functions at $s=0$.
Suppose $F$ is a real quadratic field, and $K_n := F(\bmu_n)$ 
is the extension of $F$ generated by $n$-th roots of unity, 
with $n$ prime to the conductor of $F/\Q$.  
Darmon defined a Stickelberger-type 
element $\theta_n' \in K_n^\times \otimes \Z[\Gal(K_n/F)]$, interpolating 
the first derivatives $L'(\chi\omega_F,0)$, where $\omega_F$ is 
the quadratic character attached to $F/\Q$ and $\chi$ runs through even Dirichlet characters 
of conductor $n$.  
Darmon conjectured that $\theta_n'$ is congruent, modulo a specified 
power of the augmentation ideal, to a regulator that he defined.
See \S\ref{statement} and Conjecture \ref{dconj} below for a precise statement.

Our main result is a proof of Darmon's conjecture ``away from the $2$-part''.  
In other words, we prove that the difference of the two sides of Darmon's 
conjectured congruence is an element of $2$-power order.  

The idea of our proof is a simple application of the results proven 
in \cite{kolysys}.  For every odd prime $p$ we show that although 
neither the left-hand side nor the right-hand of Darmon's 
conjectured congruence (as $n$ varies) is a ``Kolyvagin system'' as defined in \cite{kolysys}, 
each side is {\em almost} a Kolyvagin system; moreover, both sides fail to be 
Kolyvagin systems in precisely the same way. That is, we show that the left-hand side 
and right-hand side form what we call in this paper {\em pre-Kolyvagin systems} 
in the sense that they each  satisfy the specific set of local and global 
compatibility relations given in Definition \ref{preksdef} below.  
It seems that pre-Kolyvagin systems are what tend to occur ``in nature'', 
while Kolyvagin systems satisfy a cleaner set of axioms.  
We show that the two concepts are equivalent, 
by constructing (see Proposition \ref{preksks}) a natural transformation $\T$ that turns 
pre-Kolyvagin systems into Kolyvagin systems and has the properties that:
\begin{itemize}
\item  $\T$ does not change the term  associated to $n=1$, and
\item $\T$ is an isomorphism from the $\Zp$-module of pre-Kolyvagin systems 
to the $\Zp$-module of Kolyvagin systems.
\end{itemize}
Since it was proved in \cite{kolysys} that (in this situation) the 
space of Kolyvagin systems is a free $\Zp$-module of rank one, 
it follows that if two pre-Kolyvagin systems agree when $n=1$, 
then they agree for every $n$.  In the case $n= 1$, Darmon's congruence 
follows from classical formulas for $L'(\omega_F,0)$, so 
we deduce that (the $p$-part, for every odd prime $p$ of) 
Darmon's conjectured congruence formula holds for all $n$.

Darmon's conjecture begs for a generalization.  
A naive generalization, even just to the case where 
$F$ is a real abelian extension of $\Q$, is unsuccessful 
because the definition of Darmon's regulator 
does not extend to the case where $[F:\Q] > 2$.  
In a forthcoming paper we will use the ideas and conjectures of 
\cite{RS} to show how both Gross' and Darmon's conjectures are special cases of a much more 
general conjecture.  In the current paper we treat only Darmon's conjecture because it can be 
presented and proved in a very concrete and explicit manner.

The paper is organized as follows.  In \S\ref{nota} we describe our setting and notation, and 
in \S\ref{statement} we state Darmon's conjecture and our main result (Theorem \ref{mainthm}).  
In \S\ref{augq} we recall some work of Hales \cite{hales} 
on quotients of powers of augmentation ideals, that will enable us to translate the 
definition of Kolyvagin system given in \cite{kolysys} into a form that will be more 
useful for our purposes here.  In \S\ref{kssect} we give the definition of a Kolyvagin system 
(for the Galois representation $\Zp(1) \otimes \omega_F$).  
In \S\ref{prekssect} we define pre-Kolyvagin system, and give an isomorphism between the 
space of pre-Kolyvagin systems and the space of Kolyvagin systems.
In \S\ref{cupks} (resp., \S\ref{rpks}) we show that the ``Stickelberger'' side 
(resp., regulator side) of Darmon's formula is a pre-Kolyvagin system as $n$ varies.  
Finally, in \S\ref{pfsect} we combine the results of the previous sections to 
prove Theorem \ref{mainthm}.

\section{Setting and notation}
\label{nota}
 
Fix once and for all a real quadratic field $F$, and let $f$ 
be the conductor of $F/\Q$.  Let $\omega = \omega_F$ be the quadratic Dirichlet 
character associated to $F/\Q$, and 
$\tau$ the nontrivial element of $\Gal(F/\Q)$.
If $M$ is a $\Gal(F/\Q)$-module, we let $M^-$ be the subgroup of elements of 
$M$ on which $\tau$ acts as $-1$.

Throughout this paper $\ell$ will always denote a prime number.
Let $\N$ denote the set of squarefree positive integers prime to $f$.  If 
$n \in\N$ let $n_+$ be the product of all primes dividing $n$ that split in $F/\Q$, 
and $r(n) \in \Z_{\ge 0}$ the number of prime divisors of $n_+$:
\begin{align*}
n_+ &:= \prod_{{\ell\mid n},{\omega(\ell) = 1}} \ell, \\ 
   r(n) &:= \#\{\ell : \ell \mid n_+\} = \#\{\ell : \text{$\ell \mid n$ and $\ell$ splits in $F$}\}.
\end{align*}
For every $n \in \N$ let $\bmu_n$ be the 
Galois module of $n$-th roots of unity in $\bar{\Q}$, define
$$
\Gamma_n := \Gal(F(\bmu_n)/F) \cong \Gal(\Q(\bmu_n)/\Q) \cong (\Z/n\Z)^\times,
$$
and let $I_n$ denote the augmentation ideal of $\Z[\Gamma_n]$, which is generated over 
$\Z$ by $\{\gamma-1 : \gamma \in \Gamma_n\}$. 
There is a natural isomorphism
\begin{equation}
\label{caug}
\Gamma_n \cong I_n/I_n^2
\end{equation}
defined by sending $\gamma \in \Gamma_n$ to $\gamma-1 \pmod{I_n^2}$.
If $m \mid n$ then we can view $\Gamma_m$ either as the quotient $\Gal(F(\bmu_m)/F)$ 
of $\Gamma_n$, or as the subgroup $\Gal(F(\bmu_n)/F(\bmu_{n/m}))$.  With the latter 
identification we have
$$
\Gamma_n = \prod_{\ell \mid n} \Gamma_\ell, \quad I_n/I_n^2 = \bigoplus_{\ell\mid n}I_\ell/I_\ell^2
$$
the product and the sum taken over primes $\ell$ dividing $n$. 

We will usually write the group operation in multiplicative groups such as $F^\times$ 
with standard multiplicative notation (for example, with identity element $1$).  However, 
when dealing with ``mixed'' groups such as $F^\times \otimes I_n^r/I_n^{r+1}$, 
we will write the operation additively and use $0$ for the identity element.

Fix an embedding $\bar\Q \hookto \C$.

\section{Statement of the conjecture}
\label{statement}

In this section we state our modified version of Darmon's conjecture 
(mostly following \cite{darmon}) and our main result (Theorem \ref{mainthm}).

If $n \in \N$, let $\zeta_n \in \bmu_n$ be the 
inverse image of $e^{2 \pi i /n}$ under the chosen embedding $\bar\Q \hookto \C$, 
and define the cyclotomic unit
$$
\alpha_n := 
   \prod_{\gamma \in \Gal(\Q(\bmu_{nf})/\Q(\bmu_n))}\gamma(\zeta_{nf}-1)^{\omega_F(\gamma)}
   \quad\in~ F(\bmu_n)^\times
$$
and the ``first derivative $\theta$-element''
$$
\theta'_n = 
   \sum_{\gamma \in \Gamma_n}\gamma(\alpha_n) \otimes \gamma
   \quad\in~ F(\bmu_n)^\times \otimes \Z[\Gamma_n].
$$

\begin{rem}
The element $\theta'_n$ is an ``$L$-function derivative evaluator'' in the sense that 
for every even character $\chi : \Gamma_n \to \C^\times$, classical formulas give
$$
(\log|\cdot| \otimes \chi)(\theta'_n) 
   := \sum_{\gamma\in\Gamma_n}\chi(\gamma)\log|\gamma(\alpha_n)| = -2L'_n(0,\omega_F\chi)
$$
where $L_n(s,\omega_F\chi)$ is the Dirichlet $L$-function with Euler factors 
at primes dividing $n$ removed, and $|\cdot|$ is the absolute value corresponding to 
our chosen embedding $\bar{\Q} \hookto \C$.
\end{rem}

Suppose $n \in \N$.  
Let $X_n$ be the group of divisors of $F$ supported above $n\infty$, and 
let $\E_n := \O_F[1/n]^\times$, the group of $n$-units of $F$. 
We will write the action of $\Z[\Gamma_n]$ on $\E_n$ additively, 
so in particular $(1-\tau)\E_n = \{\epsilon/\epsilon^\tau : \epsilon \in \E_n\}$.

Let $\lambda_0 \in X_n$ be the archimedean place of $F$ corresponding 
to our chosen embedding $\bar\Q \hookto \C$.

\begin{lem}
\label{freer+1}
Suppose $n \in \N$, and let $r = r(n)$.
\begin{enumerate}
\item
We have
$X_{n}^- = X_{n_+}^-$, $\E_{n}^- = \E_{n_+}^-$, and 
$(1-\tau)\E_{n} = (1-\tau)\E_{n_+}$.
\item
The group $(1-\tau)\E_n$ is a free abelian group of rank $r+1$, 
and is a subgroup of finite index in $\E_n^-$.
\item
The group $X_n^-$ is a free abelian group of rank $r+1$.  If 
$n_+ = \prod_{i=1}^r \ell_i$, and $\ell_i = \lambda_i\lambda_i^\tau$, then 
$\{\lambda_0-\lambda_0^\tau,\lambda_1-\lambda_1^\tau,\ldots,\lambda_r-\lambda_r^\tau,\}$ 
is a basis of $X_n^-$.
\end{enumerate}
\end{lem}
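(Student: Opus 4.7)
The plan is to first reduce all three claims to the case $n = n_+$ by peeling off inert prime divisors of $n$ one at a time, and then to finish in the all-split case by a direct application of Dirichlet's unit theorem together with the explicit description of the $\tau$-action on places. The key input for the reduction is that $n$ is coprime to the conductor $f$, so each prime $\ell\mid n$ is unramified in $F/\Q$ and is therefore either split (in which case it contributes to $n_+$) or inert, in which case $\ell\O_F = \mathfrak{L}$ is the unique prime of $F$ above $\ell$ and is fixed by $\tau$.

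For (i) I would induct on the number of inert prime divisors of $n$: write $n = \ell n'$ with $\ell$ inert and prove the three equalities with $n'$ in place of $n_+$. For $X_n^- = X_{n'}^-$, the summand $\Z\cdot\mathfrak{L}$ of $X_n$ is pointwise $\tau$-fixed and hence contributes nothing to the $-$-part. For $\E_n^- = \E_{n'}^-$, if $\epsilon\in\E_n^-$ then $\mathrm{ord}_\mathfrak{L}(\epsilon) = \mathrm{ord}_\mathfrak{L}(\epsilon^\tau) = -\mathrm{ord}_\mathfrak{L}(\epsilon)$ using $\tau\mathfrak{L}=\mathfrak{L}$ and $\epsilon^\tau=\epsilon^{-1}$, so $\epsilon$ is a unit at $\mathfrak{L}$, i.e.\ $\epsilon\in\E_{n'}$. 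For $(1-\tau)\E_n = (1-\tau)\E_{n'}$, any $\epsilon\in\E_n$ factors as $\ell^a\epsilon'$ with $a = \mathrm{ord}_\mathfrak{L}(\epsilon)$ and $\epsilon'\in\E_{n'}$, and since $\ell\in\Q^\times$ is $\tau$-fixed we have $(1-\tau)\ell = 0$ in the additive convention (equivalently $\ell/\ell^\tau = 1$), so $(1-\tau)\epsilon = (1-\tau)\epsilon'$.

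With (i) in hand, (iii) will be immediate: the $2r+2$ places of $F$ above $n_+\infty$ fall into $r+1$ $\tau$-orbits of size two, namely $\{\lambda_i,\lambda_i^\tau\}_{i=0}^{r}$ (the archimedean pair is a genuine $\tau$-orbit because $F$ is real quadratic, so the chosen embedding and its $\tau$-conjugate are distinct real embeddings), and $\tau$ acts on the natural $\Z$-basis of $X_{n_+}$ as the product of these disjoint transpositions, forcing $\{\lambda_i-\lambda_i^\tau\}_{i=0}^{r}$ to be a $\Z$-basis of $X_n^- = X_{n_+}^-$. For (ii), Dirichlet's unit theorem gives $\mathrm{rank}_\Z\,\E_{n_+} = 2 + 2r - 1 = 2r+1$, while $\E_{n_+}^+ = \E_{n_+}\cap\Q^\times = \Z[1/n_+]^\times$ has rank $r$; the short exact sequence
\begin{equation*}
0 \too \E_{n_+}^+ \too \E_{n_+} \map{1-\tau} (1-\tau)\E_{n_+} \too 0
\end{equation*}
then shows that $(1-\tau)\E_{n_+}$ has rank $r+1$, and it is torsion-free because the torsion $\{\pm 1\}$ of $\E_{n_+}$ lies in $\E_{n_+}^+$. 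Since $\E_{n_+}^-\otimes\Q$ also has dimension $r+1$ by the same eigenspace count, the inclusion $(1-\tau)\E_{n_+} \subseteq \E_{n_+}^-$ is of finite index.

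I do not anticipate a serious obstacle: the whole proof is bookkeeping on ranks plus a careful reading of the $\tau$-action on places and on valuations. The one subtlety worth flagging is the switch between additive and multiplicative conventions fixed in \S\ref{nota}, and correspondingly the fact that the ``finite index, not equality'' in (ii) is a $2$-power phenomenon ($(1-\tau)u = u^2$ multiplicatively for $u\in\E_n^-$) rather than an equality.
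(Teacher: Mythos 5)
Your overall structure is sound and consistent with what the paper deems ``clear'': the reduction to $n=n_+$ by peeling off inert primes, the description of the $\tau$-action on the standard basis of $X_{n_+}$, and the rank count via Dirichlet together with the exact sequence $0 \to \E_{n_+}^+ \to \E_{n_+} \xrightarrow{1-\tau} (1-\tau)\E_{n_+} \to 0$. The paper in fact states that the \emph{only} part of this lemma requiring argument is the torsion-freeness of $(1-\tau)\E_n$, and that is precisely where your proposal has a genuine gap.

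You justify torsion-freeness by noting that ``the torsion $\{\pm1\}$ of $\E_{n_+}$ lies in $\E_{n_+}^+$.'' This is a non sequitur: $\E_{n_+}/\E_{n_+}^+ \cong (1-\tau)\E_{n_+}$ can have torsion even when every torsion element of $\E_{n_+}$ lies in $\E_{n_+}^+$ (as with $\Z/2\Z$ being the quotient of a torsion-free group). Concretely, $(1-\tau)\E_{n_+}$ has torsion iff $-1\in(1-\tau)\E_{n_+}$, i.e.\ iff there is $\epsilon\in\E_{n_+}$ with $\epsilon^\tau=-\epsilon$ — equivalently, iff $\E_{n_+}^+$ fails to be saturated in $\E_{n_+}$ — and your observation about where $\pm1$ sits does not rule this out. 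The paper's argument is exactly aimed at this: write $F=\Q(\sqrt d)$ with $d>1$ squarefree; if $x^\tau=-x$ then $x/\sqrt d\in\Q$, so $\ord_{\mathfrak p}(x)$ is odd (hence nonzero) at primes $\mathfrak p$ of $F$ dividing $d$, and since $d\mid f$ and $n$ is prime to $f$ such $x$ cannot lie in $\E_n$. You should insert this (or an equivalent) argument where you currently assert torsion-freeness; everything else in your writeup is correct.
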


\begin{proof}
The only part that is not clear is that $(1-\tau)\E_n$ is torsion-free, i.e., 
$-1 \notin (1-\tau)\E_n$.  Let $d > 1$ be a squarefree integer such that 
$F = \Q(\sqrt{d})$. If $x^\tau = -x$, then $x/\sqrt{d} \in \Q$, so $x$ 
is not a unit at the primes dividing $d$.  Since $n$ is 
prime to $d$, we cannot have $x \in \E_n$.
\end{proof}

\begin{defn}
A {\em standard} $\Z$-basis of $X_n^-$ is a basis of the form described 
in Lemma \ref{freer+1}(iii).  
Given a standard basis of $X_n^-$, a $\Z$-basis $\{\epsilon_0, \ldots, \epsilon_r\}$ 
of $(1-\tau)\E_n$ will be called {\em oriented} 
if the (regulator) determinant of the logarithmic embedding
$$
(1-\tau)\E_n \too X_n^- \otimes \R, \quad \epsilon \mapsto 
  \sum_{\lambda\mid n_+\infty}\log|\epsilon|_{\lambda}\cdot\lambda
$$
with respect to the two bases is positive.  Concretely, this regulator 
is the determinant of the matrix whose entry in row $i$ and column $j$ is
$\log|\epsilon_j|_{\lambda_i}$.
\end{defn}

\begin{rem}
Choosing a standard basis of $X_n^-$ is equivalent to ordering the prime 
divisors $\ell_i$ of $n_+$ and choosing one prime of $F$ above each $\ell_i$.

Any basis of $(1-\tau)\E_n$ can be oriented either by reordering the 
basis, or inverting one of the basis elements.
\end{rem}

\begin{defn}
Suppose $n \in \N$ and $\lambda$ is a prime of $F$ dividing $n_+$.  
Define a homomorphism 
$$
\gr{\lambda}{n} : F^\times \too I_n/I_n^2
$$
by
$$
\grx{\lambda}{n}{x} = [x, F_\lambda(\bmu_n)/F_\lambda]-1 \pmod{I_n^2}
$$
where $[x, F_\lambda(\bmu_n)/F_\lambda] \in \Gamma_n$ is the local Artin symbol.  
Note that if $\ord_\lambda(x) = 0$, then $[x, F_\lambda(\bmu_n)/F_\lambda]$ 
belongs to the inertia group $\Gamma_\ell \subset \Gamma_n$, so 
$\grx{\lambda}{n}{x} = \grx{\lambda}{\ell}{x} \in I_\ell/I_\ell^2$ 
and $\grx{\lambda}{n/\ell}{x} = 0$.  In general, if $d \mid n$ then 
$$\grx{\lambda}{n}{x} = \grx{\lambda}{d}{x} + \grx{\lambda}{n/d}{x} 
   \in I_d/I_d^2 \oplus I_{n/d}/I_{n/d}^2 = I_n/I_n^2.$$
\end{defn}

\begin{defn}
\label{Rdef}
(See \cite[p.\ 308]{darmon}.)
Suppose $n \in \N$, and let $r = r(n)$.
Choose a standard basis $\{\lambda_0-\lambda_0^\tau,\ldots,\lambda_r-\lambda_r^\tau\}$ 
of $X_n^-$ and an oriented basis 
$\{\epsilon_0, \ldots, \epsilon_r\}$ of $(1-\tau)\E_n$, and define 
the regulator $R_n \in \E_n^- \otimes I_n^r/I_n^{r+1}$ by 
$$
R_n := \left|
\begin{array}{ccccccc}
\epsilon_0 & \epsilon_1 & \cdots & \epsilon_r \\
\grx{\lambda_1}{n}{\epsilon_0} & \grx{\lambda_1}{n}{\epsilon_1} & \cdots & \grx{\lambda_1}{n}{\epsilon_r} \\
\vdots & \vdots && \vdots\\
\grx{\lambda_r}{n}{\epsilon_0} & \grx{\lambda_r}{n}{\epsilon_1} & \cdots & \grx{\lambda_r}{n}{\epsilon_r}
\end{array}
\right| \quad  \in (1-\tau)\E_n \otimes I_n^r/I_n^{r+1}.
$$
This determinant, and the ones that follow below, are meant to be evaluated 
by expanding by minors along the top row, i.e.,
\begin{equation}
\label{minors}
R_n := \sum_{j=0}^r (-1)^{j}\epsilon_j \otimes \det(A_{1j})
\end{equation}
where $A_{1j}$ is the $r \times r$ matrix (with entries in $I_n/I_n^2$) 
obtained by removing the first row and 
$j$-th column of the matrix above.
\end{defn}

Note that this definition of $R_n$ does not depend on the choice of $\Z$-bases.  
The possible ambiguity of $\pm1$ is removed by requiring that the basis 
of $(1-\tau)\E_n$ be oriented.

Let $h_n$ denote the ``$n$-class number'' of $F$, i.e., the order of the ideal class 
group $\Pic(\O_F[1/n])$.  For the rest of this section we write simply $r$ instead of $r(n)$.

\begin{thm}[(Darmon {\cite[Theorem 4.2]{darmon}})]
For every $n \in \N$, we have 
$$\theta'_n \in F(\bmu_n)^\times \otimes I_n^r.$$
\end{thm}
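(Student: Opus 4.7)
The plan is to argue by induction on the number of prime divisors of $n$, using the standard distribution (Euler system) relation satisfied by the twisted cyclotomic elements $\alpha_n$. The base case $n = 1$ is trivial: $r(1) = 0$ and $I_1^0 = \Z$. For the inductive step, I would first establish the Euler system norm identity
$$
N_{F(\bmu_n)/F(\bmu_{n/\ell})}(\alpha_n) = \alpha_{n/\ell}^{1 - \omega(\ell)\Frob_\ell^{-1}} \qquad (\ell \mid n),
$$
where $\Frob_\ell \in \Gamma_{n/\ell}$ denotes the arithmetic Frobenius at $\ell$. This is deduced from the classical norm relation for the cyclotomic units $\zeta_m - 1$ by averaging with the quadratic character $\omega$ that defines $\alpha_n$.

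Iterating this identity for an arbitrary divisor $d \mid n$, a direct computation shows that the image of $\theta'_n$ under the natural projection $\pi_d : F(\bmu_n)^\times \otimes \Z[\Gamma_n] \to F(\bmu_n)^\times \otimes \Z[\Gamma_d]$ equals $\theta'_d$ multiplied by the Euler factor $E_{n/d} := \prod_{\ell \mid n/d}(1 - \omega(\ell)\Frob_\ell)$, viewed as an element of $\Z[\Gamma_d]$. The decisive observation is that each split prime $\ell \mid n/d$ (for which $\omega(\ell) = 1$) contributes a factor $(1 - \Frob_\ell)$ lying in $I_d$, whereas each inert prime contributes $(1 + \Frob_\ell)$, which is a unit modulo $I_d$ (reducing to $2$); thus $E_{n/d} \in I_d^{r((n/d)_+)}$. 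Combining with the inductive hypothesis $\theta'_d \in F(\bmu_d)^\times \otimes I_d^{r(d)}$ and the equality $r(d) + r((n/d)_+) = r(n)$, one obtains $\pi_d(\theta'_n) \in F(\bmu_n)^\times \otimes I_d^{r(n)}$ for every proper divisor $d \mid n$.

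The main obstacle is the final step, of promoting these projected congruences into the global statement $\theta'_n \in F(\bmu_n)^\times \otimes I_n^{r(n)}$. Here I would invoke the tensor decomposition
$$
I_n^k/I_n^{k+1} \;\cong\; \bigoplus_{\underline{r}:\,|\underline{r}| = k}\; \bigotimes_{\ell \mid n} I_\ell^{r_\ell}/I_\ell^{r_\ell+1}
$$
(in the style of Hales's description recalled in \S\ref{augq}). For $k < r(n) \le \#\{\ell \mid n\}$, any multi-index $\underline{r}$ with $|\underline{r}| = k$ necessarily has support $\{\ell : r_\ell > 0\}$ strictly smaller than $\{\ell \mid n\}$, so the corresponding component of $\theta'_n$ is faithfully detected by $\pi_d$ with $d = \prod_{r_\ell > 0}\ell \subsetneq n$. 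Since $\pi_d(\theta'_n) \in I_d^{r(n)} \subseteq I_d^{k+1}$, every such component vanishes, so $\theta'_n$ has no component of weight $< r(n)$, and hence $\theta'_n \in F(\bmu_n)^\times \otimes I_n^{r(n)}$. This combinatorial gluing is where the argument is most delicate, but it is essentially formal once the Euler factor bookkeeping is in place.
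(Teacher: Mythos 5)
This statement is quoted from Darmon's paper and not reproved here, so there is no in-paper proof to compare against. Your overall strategy is the right one and is essentially Darmon's: the twisted norm relation $N_{F(\bmu_n)/F(\bmu_{n/\ell})}\alpha_n = \alpha_{n/\ell}^{1-\omega(\ell)\Frob_\ell^{-1}}$ is correct, the induced identity $\pi_d(\theta'_n)=\theta'_d\cdot\prod_{\ell\mid n/d}(1-\omega(\ell)\Frob_\ell)$ follows by the unfolding you describe, and together with the inductive hypothesis and $r(d)+r(n/d)=r(n)$ this gives $\pi_d(\theta'_n)\in F(\bmu_n)^\times\otimes I_d^{r(n)}$ for every proper divisor $d\mid n$.

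The gap is in the gluing step. The isomorphism you invoke, $I_n^k/I_n^{k+1}\cong\bigoplus_{|\underline{r}|=k}\bigotimes_{\ell\mid n}I_\ell^{r_\ell}/I_\ell^{r_\ell+1}$, is \emph{not} what Hales proves and is false in general: as recalled in \S\ref{augq}, $I_n^k/I_n^{k+1}$ is the degree-$k$ part of $A_n/(J_n+J_n')$, and the ``extra'' relations $J_n'$ are genuinely nontrivial (their presence is precisely why property \eqref{star} has to be invoked). Moreover, $\theta'_n$ lives a priori in $F(\bmu_n)^\times\otimes\Z[\Gamma_n]$, not in $F(\bmu_n)^\times\otimes I_n^k$, so speaking of its ``image in $I_n^k/I_n^{k+1}$'' is not yet meaningful. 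The argument is easily repaired, however, by replacing the graded claim with the genuine, relation-free direct-sum decomposition
$$
\Z[\Gamma_n]\;=\;\bigotimes_{\ell\mid n}\Z[\Gamma_\ell]\;=\;\bigoplus_{S\subset\{\ell\,:\,\ell\mid n\}} M_S,\qquad M_S:=\bigotimes_{\ell\in S}I_\ell,
$$
coming from $\Z[\Gamma_\ell]=\Z\oplus I_\ell$. Under it $I_n^r$ decomposes as $\bigoplus_S\bigl(I_n^r\cap M_S\bigr)$, and $I_n^r\cap M_S=M_S$ whenever $|S|\ge r$. For $|S|<r(n)$, put $d:=\prod_{\ell\in S}\ell$; then $\pi_d$ is the identity on $M_{S'}$ for $S'\subset S$ and kills $M_{S'}$ for $S'\not\subset S$, so the $M_S$-component of $\theta'_n$ equals the $M_S$-component of $\pi_d(\theta'_n)=\theta'_d\cdot\prod_{\ell\mid n/d}(1-\omega(\ell)\Frob_\ell)$, which by your Euler-factor bookkeeping and the inductive hypothesis lies in $F(\bmu_n)^\times\otimes\bigl(I_d^{r(n)}\cap M_S\bigr)=F(\bmu_n)^\times\otimes\bigl(I_n^{r(n)}\cap M_S\bigr)$. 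Summing over all $S$ gives $\theta'_n\in F(\bmu_n)^\times\otimes I_n^{r(n)}$. With that correction, your proof is complete.
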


For $n \in \N$, let $\tilde\theta'_n$ denote the image of $\theta'_n$ 
in $F(\bmu_n)^\times \otimes I_n^r/I_n^{r+1}$.  Let $s$ be the number of prime divisors 
of $n/n_+$; we continue to denote by $r$ the number of prime factors of $n_+$.

The following is a slightly modified version of Darmon's ``leading term'' conjecture 
\cite[Conjecture 4.3]{darmon}.

\begin{conj}
\label{dconj}
For every $n \in \N$, we have
$$
\tilde\theta'_n = -2^s h_n R_n \quad 
   \text{in $(F(\bmu_n)^\times/\{\pm1\}) \otimes I_n^r/I_n^{r+1}$}.
$$
\end{conj}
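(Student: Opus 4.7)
The plan is to prove the statement away from $2$-torsion: for every odd prime $p$, to show that the images of the two sides agree in $(F(\bmu_n)^\times/\{\pm1\}) \otimes I_n^r/I_n^{r+1} \otimes \Zp$. The strategy is to regard both families, as $n$ varies over $\N$, as \emph{pre-Kolyvagin systems} for the Galois representation $\Zp(1)\otimes\omega_F$, so that a rigidity theorem forces equality from a single base case.

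First, I would package each side into a system indexed by $n\in\N$ living in a module built from $(1-\tau)\E_n \otimes I_n^r/I_n^{r+1}$ (the Stickelberger side lives a priori in a bigger space, but its $\omega_F$-component under $\tau$ projects into the relevant Selmer-style module). The homomorphisms $\grx{\lambda}{n}{\cdot}$ coming from local Artin symbols together with the Hales-style description of augmentation quotients (\S\ref{augq}) provide the translation from these concrete objects into the Kolyvagin system language of \cite{kolysys}.

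Next, separately for each side, I would verify the local-global compatibility axioms of Definition \ref{preksdef}. For the Stickelberger side (\S\ref{cupks}), the compatibilities come from the Euler system distribution relations satisfied by the cyclotomic units $\alpha_n$ at primes $\ell$ split in $F$, together with a congruence coming from local reciprocity that accounts for the derivative at inert $\ell$. For the regulator side (\S\ref{rpks}), the compatibilities reduce to linear algebra in augmentation quotients: expanding $R_n$ along different rows shows that adding one prime factor to $n$ changes $R_n$ in exactly the expected way modulo $I_n^{r+1}$. Having established that both sides are pre-Kolyvagin systems, Proposition \ref{preksks} gives a natural isomorphism $\T$ from pre-Kolyvagin systems to Kolyvagin systems as $\Zp$-modules that fixes the $n=1$ term, while the main theorem of \cite{kolysys} identifies the module of Kolyvagin systems with a free $\Zp$-module of rank one. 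Hence two pre-Kolyvagin systems agreeing at $n=1$ agree for every $n\in\N$.

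Finally, the base case $n=1$ is classical: here $r=0$, $n_+ = 1$ and $s=0$, and the conjecture collapses to the identity $\tilde\theta'_1 \equiv -h_1 \epsilon_0 \pmod{\pm1}$ in $F^\times$, where $\epsilon_0$ is an oriented generator of $(1-\tau)\O_F^\times$. Via the interpretation $(\log|\cdot|)(\theta'_1) = -2L'(0,\omega_F)$ this is exactly the Dirichlet analytic class number formula for the real quadratic field $F$. The main obstacle will be the verification of the pre-Kolyvagin axioms, especially on the Stickelberger side, where the cup-product-like behavior of $\theta'_n$ as a new prime is added has to be unpacked, and on the regulator side where one must thread through the sign conventions attached to the orientation of $(1-\tau)\E_n$, the standard basis of $X_n^-$, and the factors $2^s$ and $-2$ so that both systems match under $\T$. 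The $2$-part lies outside this method because the Kolyvagin system rigidity of \cite{kolysys} has ambiguity at $p=2$ in this setting.
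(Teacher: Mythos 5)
Your proposal matches the paper's strategy for Theorem~\ref{mainthm} (the $\Z[1/2]$-version of Conjecture~\ref{dconj}) essentially step for step: fix an odd prime $p$, show $\{2^{-s(n)}\tilde\theta'_n\}$ and $\{-h_nR_n\}$ are pre-Kolyvagin systems (\S\ref{cupks}, \S\ref{rpks}), pass through the isomorphism of Proposition~\ref{preksks} and the rank-one rigidity from \cite{kolysys} to reduce to the case $n=1$, and invoke the analytic class number formula for the base case (Proposition~\ref{n=1}). One small imprecision: the cyclotomic-unit distribution relation and local reciprocity enter only at primes $\ell\mid n_+$ (split in $F$); the inert-prime axiom (Definition~\ref{preksdef}(v)) is trivial because the Kolyvagin derivative class $\beta_{n_+}$ depends only on $n_+$, so no ``derivative at inert $\ell$'' is needed.
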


The main theorem of this paper is the following.

\begin{thm}
\label{mainthm}
For every $n \in \N$, we have
$$
\tilde\theta'_n = -2^s h_n R_n \quad 
   \text{in $F(\bmu_n)^\times \otimes I_n^r/I_n^{r+1} \otimes \Z[1/2]$}.
$$
\end{thm}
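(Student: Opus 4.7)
The plan is to fix an arbitrary odd prime $p$ and to show that both sides of Darmon's formula agree in $F(\bmu_n)^\times \otimes I_n^r/I_n^{r+1} \otimes \Zp$ for every $n \in \N$. Letting $p$ range over all odd primes then yields the theorem after tensoring with $\Z[1/2]$. The strategy, as outlined in the introduction, is to view the two families $n \mapsto \tilde\theta'_n$ and $n \mapsto -2^s h_n R_n$ as pre-Kolyvagin systems and to appeal to a rigidity statement.

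The first main step is to verify that each family, viewed as varying with $n \in \N$, satisfies the local and global compatibility relations of Definition \ref{preksdef}, so that both form pre-Kolyvagin systems for the Galois representation $\Zp(1) \otimes \omega_F$. For the Stickelberger side $\{\tilde\theta'_n\}$, this is the content of \S\ref{cupks}: the cyclotomic units $\alpha_n$ satisfy Euler-system-type norm relations from $F(\bmu_n)$ down to $F(\bmu_{n/\ell})$, and their local behavior at ramified primes $\ell \mid n$ is controlled by the Artin symbols $\grx{\lambda}{n}{\cdot}$. For the regulator side $\{-2^s h_n R_n\}$, this is the content of \S\ref{rpks}: the pre-Kolyvagin axioms translate into determinantal identities for the minors of the matrix defining $R_n$ upon passage from $n$ to $n/\ell$, together with the bookkeeping for $h_n$ and for the inert primes that contribute the factor $2^s$.

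Once both families are known to be pre-Kolyvagin systems, I would apply the natural transformation $\T$ of Proposition \ref{preksks} to both, obtaining two honest Kolyvagin systems. By the main result of \cite{kolysys} recalled in \S\ref{kssect}, the $\Zp$-module of Kolyvagin systems for $\Zp(1) \otimes \omega_F$ is free of rank one, so two such systems that agree at $n=1$ must agree for all $n$. Because $\T$ fixes the $n=1$ term, it suffices to verify Darmon's formula in the case $n=1$. In that case $r = s = 0$, $R_1$ reduces to a suitably oriented fundamental unit of $F$, and the required identity $\tilde\theta'_1 = -h_F R_1$ is the classical Dirichlet formula for $L'(\omega_F,0)$ coming from the analytic class number formula for $F$.

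The main obstacle will be the verification of the pre-Kolyvagin axioms for the regulator side. On the Stickelberger side these axioms are almost built into the definition, being essentially restatements of norm compatibilities of cyclotomic units and of reciprocity laws for local Artin symbols. The regulator side, by contrast, is a determinant in $(1-\tau)\E_n \otimes I_n^r/I_n^{r+1}$, and the pre-Kolyvagin relations will require expanding and re-expanding such determinants modulo higher powers of the augmentation ideal, keeping track of orientations, of the change in $h_n$ as $n$ changes, and of the sign governed by the parity of the number of inert primes in $n$. The technology of \S\ref{augq}, based on Hales' work on quotients of augmentation ideals, is precisely what is designed to make this combinatorial bookkeeping manageable.
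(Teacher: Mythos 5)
Your proposal follows the paper's proof essentially step for step: fix an odd prime $p$; show that the Stickelberger and regulator families are pre-Kolyvagin systems (the content of \S\ref{cupks} and \S\ref{rpks}); pass to genuine Kolyvagin systems via the transformation $\T$ of Proposition \ref{preksks}; invoke the rank-one rigidity of $\KS(F)$ from \cite{kolysys} together with Proposition \ref{n=1} for $n=1$; and sweep over all odd $p$. Two small discrepancies are worth flagging. First, a normalization slip: the pre-Kolyvagin systems produced in the paper are $\{2^{-s(n)}\tilde\theta'_n\}$ and $\{h_n R_n\}$, not $\{\tilde\theta'_n\}$ and $\{-2^s h_n R_n\}$; the power of $2$ is absorbed on the cyclotomic side (Theorem \ref{thetapreks}), not the regulator side. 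This is harmless but worth stating correctly, since the factor of $2$ is exactly what makes the norm relation for $\alpha_n$ at inert primes match axiom (v). Second, and more substantively, the rigidity argument via Corollary \ref{prekskscor} only yields the identity for $n \in \N_p$, i.e.\ for $n$ prime to $p$, whereas the theorem is asserted for all $n \in \N$; your sentence ``Letting $p$ range over all odd primes then yields the theorem'' glosses over the residual case $p \mid n$. The paper disposes of this case with the remark that Proposition \ref{crit}(iv) forces $(p-1)\In = 0$, so $(F^\times)^- \otimes \In \otimes \Zp$ vanishes and the identity is vacuous $p$-adically. You should include some such argument for $p \mid n$; without it the sweep over primes does not close.
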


In other words, the $p$-part of Conjecture \ref{dconj} holds for every odd prime 
$p$; in still other words, $\tilde\theta'_n + 2^s h_n R_n$ has $2$-power order 
in $F(\bmu_n)^\times \otimes I_n^r/I_n^{r+1}$.

A key step in the proof of Theorem \ref{mainthm} is the following observation.

\begin{prop}[(Darmon {\cite[Theorem 4.5(1)]{darmon}})]
\label{n=1}
Conjecture \ref{dconj} holds if $n=1$.
\end{prop}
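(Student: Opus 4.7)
The plan is to reduce the $n=1$ case of Conjecture \ref{dconj} to the classical analytic class number formula for the real quadratic field $F$. When $n=1$ we have $r=0$, $s=0$, $\Gamma_1$ is trivial and $I_1=0$, so $I_1^0/I_1^1 \cong \Z$ and the conjecture collapses to the single identity
\[
\alpha_1 = \epsilon_0^{-h_1} \quad\text{in}~ F^\times/\{\pm 1\},
\]
(writing the group law multiplicatively) where $\epsilon_0$ is an oriented $\Z$-generator of the rank-one group $(1-\tau)\E_1$ and $h_1$ is the class number of $F$.

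First I apply the Remark following the definition of $\theta'_n$, evaluated at the trivial character, to translate the left-hand side into analytic data:
\[
\log|\alpha_1| = -2\, L'(0,\omega).
\]
Using the factorization $\zeta_F(s) = \zeta(s) L(s,\omega)$ together with the analytic class number formula at $s=0$ (where $\zeta_F$ has a simple zero of leading coefficient $-h_1 (\log\eta_F)/2$ and $\zeta(0) = -1/2$), I obtain $L'(0,\omega) = h_1 \log \eta_F$, where $\eta_F>1$ is the fundamental unit of $F$; hence $\log|\alpha_1| = -2 h_1 \log \eta_F$.

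Next I relate $\epsilon_0$ to $\eta_F$. A short calculation shows that $(1-\tau)\E_1$ is generated by $\eta_F/\eta_F^\tau$, which equals $\eta_F^2$ or $-\eta_F^2$ according to whether $N_{F/\Q}(\eta_F) = 1$ or $-1$. In either case the orientation condition $\log|\epsilon_0|_{\lambda_0} > 0$ forces $\log|\epsilon_0| = 2\log\eta_F$, so $\log|\alpha_1 \epsilon_0^{h_1}|_{\lambda_0} = 0$. The same identity holds at $\lambda_0^\tau$ because $\alpha_1$ and $\epsilon_0$ both lie in $\E_1^- = \{u \in \E_1 : u u^\tau = 1\}$. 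A unit of $F$ of trivial archimedean absolute value is a root of unity, hence $\pm 1$, which gives the desired identity.

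The analytic ingredient is classical, so no step should be a serious obstacle. The one subtlety is the bookkeeping of signs and orientations: in particular, one must verify that the exponent of $\eta_F$ appearing in $\epsilon_0$ is exactly $2$ (not $1$), which is what prevents the Hasse unit index from spoiling the identity.
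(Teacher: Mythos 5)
Your proof is correct and follows essentially the same route as the paper's: both reduce the $n=1$ case to the identity $\log|\alpha_1| = -h_1\log|\epsilon_0|$ via Dirichlet's analytic class number formula, identify $\epsilon_0$ with $\eta_F/\eta_F^\tau$ (hence $\log|\epsilon_0| = 2\log\eta_F$), and conclude equality up to a root of unity because both sides lie in the minus part of $\O_F^\times$. You have merely spelled out the factorization $\zeta_F = \zeta\cdot L(\cdot,\omega_F)$ and the Hasse-unit-index bookkeeping that the paper leaves implicit in the line $|\epsilon/\epsilon^\tau| = |\epsilon|^2$.
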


\begin{proof}
When $n=1$ we have $r=0$, $I_n^r/I_n^{r+1} = \Z$, 
$
\tilde\theta'_1 = \theta'_1 = \alpha_1 \in \O_F^\times,
$
and $R_1 = \epsilon/\epsilon^\tau$, where $\epsilon$ is a generator of 
$\O_F^\times/\{\pm1\}$ and $|\epsilon/\epsilon^\tau| = |\epsilon|^2 > 1$ 
at our specified archimedean place.  
Dirichlet's analytic class number formula shows that 
$$
-\frac{1}{2}\log|\alpha_1| = L'(0,\omega_F) = h_F\log|\epsilon| 
   = \frac{1}{2}h_F \log|\epsilon/\epsilon^\tau |
$$
where $h_F = h_1$ is the class number of $F$. 
Hence $\alpha_1 = \pm(\epsilon/\epsilon^\tau)^{-h_F}$ in $\O_F^\times$. 
\end{proof}

\begin{rems}
(i) In Darmon's formulation \cite[Conjecture 4.3]{darmon}, 
the regulator $R_n$ was defined with respect to a basis of $\E_n^-/\{\pm1\}$ 
instead of $(1-\tau)\E_n$, and 
there was an extra factor of $2$ on the right-hand side.  
This agrees with Conjecture \ref{dconj} if and only if $[\E_n^-:\pm(1-\tau)\E_n] = 2$, 
i.e., if and only if $-1 \notin \bN_{F/\Q}\E_n$.

(ii) The ambiguity of $\pm1$ in Conjecture \ref{dconj} is necessary.  
Namely, even when $n=1$, 
we may only have $\tilde\theta'_1 = h_1R_1$ in $\O_F^\times/\{\pm1\}$.  
Since $\alpha_1$ is always positive (it is a norm from a CM field to $F$), 
the proof of Proposition \ref{n=1} shows that $\tilde\theta'_1 \ne -h_1R_1$ 
in $F^\times$ when $h_F$ is odd and $\O_F^\times$ has a unit of norm $-1$.  
Note that in this case $\tilde\theta'_1$ and $-h_1R_1$ differ (multiplicatively) 
by an element 
of order $2$ in $F^\times$, so the discrepancy disappears when we tensor with $\Z[1/2]$.  
\end{rems}

\section{Augmentation quotients}
\label{augq}

\begin{defn}
Suppose $n \in \N$, and let $r = r(n)$.   
Let $\In \subset I_n^r/I_n^{r+1}$ be the (cyclic) subgroup generated by monomials 
$\prod_{\ell\mid n_+}(\gamma_\ell-1)$ with $\gamma_\ell \in \Gamma_\ell$. 
Let $\J_n \subset I_n^r/I_n^{r+1}$ be the subgroup generated by monomials 
$\prod_{i=1}^r(\gamma_i - 1)$ where each $\gamma_i \in \Gamma_{\ell_i}$ for 
some $\ell_i$ dividing $n$, and $\{\ell_1,\ldots,\ell_r\} \ne \{\ell: \ell \mid n_+\}$ 
(i.e., either one of the $\ell_i$ divides $n/n_+$, or $\ell_i = \ell_j$ for some $i \ne j$).  
If $n = d_1d_2$ then there is a natural identification 
$\In = \I_{d_1}\I_{d_2} \subset I_{n}^{r}/I_{n}^{r+1}$, and if $n = \ell$ is prime then 
$\I_\ell = I_\ell/I_\ell^2$ and $\J_\ell = 0$.
\end{defn}

If $d \mid n$, let 
$$
\pi_d : \Z[\Gamma_n] \onto \Z[\Gamma_d] \hookto \Z[\Gamma_n]
$$ 
denote the composition of the natural maps.  
We also write $\pi_d$ for the induced map on $I_n^k/I_n^{k+1}$ for $k \ge 0$. 

The following proposition is based on work of Hales \cite{hales}. 

\begin{prop}
\label{crit}
Suppose $n \in\N$, and $r=r(n)$.
Then:
\begin{enumerate}
\item
$I_n^r/I_n^{r+1} = \In \oplus \Jn$.
\item
If $d \mid n_+$ and $d > 1$, then $\pi_{n/d}(\In) = 0$
and $\pi_{n/d}(I_n^r/I_n^{r+1}) \subset \Jn$.
\item
$\In = \{v \in I_n^r/I_n^{r+1} : \text{$\pi_{n/\ell}(v) = 0$ for every $\ell$ dividing $n_+$}\}$.
\item
The map $\otimes_{\ell\mid n_+}\Gamma_\ell \to \In$ defined by 
$\otimes_{\ell\mid n_+} \gamma_\ell \mapsto \prod_{\ell\mid n_+}(\gamma_\ell-1)$ 
is an isomorphism.
\end{enumerate} 
\end{prop}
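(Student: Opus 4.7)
The plan is to analyze $I_n^r/I_n^{r+1}$ via the product structure $\Gamma_n = \prod_{\ell \mid n} \Gamma_\ell$, which induces $\Z[\Gamma_n] = \bigotimes_{\ell \mid n} \Z[\Gamma_\ell]$ and $I_n^r = \sum_{\sum_\ell r_\ell = r} \bigotimes_\ell I_\ell^{r_\ell}$. The essential input from Hales \cite{hales} is that the resulting sum in the associated graded becomes a direct sum:
$$
I_n^r/I_n^{r+1} \;=\; \bigoplus_{(r_\ell):\,\sum r_\ell = r}\; \bigotimes_{\ell \mid n} I_\ell^{r_\ell}/I_\ell^{r_\ell+1},
$$
indexed by tuples of nonnegative integers summing to $r$. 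Under this decomposition, $\In$ is precisely the summand for the multiindex with $r_\ell = 1$ for every $\ell \mid n_+$ and $r_\ell = 0$ for every $\ell \mid n/n_+$, since that summand is generated by products $\prod_{\ell \mid n_+}(\gamma_\ell - 1)$ via \eqref{caug}; the sum of all remaining summands is $\Jn$, as those account for exactly the multiindices that either assign positive multiplicity to some $\ell \mid n/n_+$ or repeat a prime of $n_+$. This immediately gives (i).

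For (iv), I would first check that the assignment $\otimes_\ell \gamma_\ell \mapsto \prod_{\ell \mid n_+} (\gamma_\ell - 1)$ is multilinear, using $(\gamma\gamma' - 1) - (\gamma - 1) - (\gamma'-1) = (\gamma-1)(\gamma'-1) \in I_\ell^2$ and observing that multiplying this cross term by the $(\gamma_{\ell'} - 1)$ for the other primes $\ell' \mid n_+$ pushes it into $I_n^{r+1}$. Surjectivity onto $\In$ is by definition, and injectivity follows from the identification in (i) of $\In$ with the single summand $\bigotimes_{\ell \mid n_+}(I_\ell/I_\ell^2)$, which equals $\bigotimes_{\ell \mid n_+}\Gamma_\ell$ via \eqref{caug}.

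For (ii) and (iii), the key is that $\pi_{n/d}$ acts on the multiindex decomposition by killing summands with $r_\ell > 0$ for some $\ell \mid d$, and by the identity on summands supported entirely on primes of $n/d$. Hence when $d \mid n_+$ and $d > 1$, each prime of $d$ has $r_\ell = 1$ in the $\In$ multiindex, so $\pi_{n/d}(\In) = 0$, and the image of $\pi_{n/d}$ misses the $\In$ multiindex entirely and therefore lies in $\Jn$, giving (ii). For (iii), an element $v \in \bigcap_{\ell \mid n_+} \ker(\pi_{n/\ell})$ must have every multiindex component satisfying $r_\ell > 0$ for every $\ell \mid n_+$; since $\sum_\ell r_\ell = r = \#\{\ell \mid n_+\}$, we are forced into $r_\ell = 1$ for $\ell \mid n_+$ and $r_\ell = 0$ elsewhere, so $v \in \In$.

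The main obstacle is justifying the direct-sum decomposition of $I_n^r/I_n^{r+1}$ by multiindices. A priori the natural surjection from the tensor product to the associated graded could have a kernel coming from the torsion of the $\Gamma_\ell$; supplying the absence of such relations is exactly the content of Hales' analysis of augmentation quotients of finite abelian groups. Once this is granted, the proposition reduces to bookkeeping on which multiindex summands are hit by $\In$, $\Jn$, and the $\pi_{n/d}$ maps.
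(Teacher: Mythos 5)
Your strategy is close in spirit to the paper's — both work through the Hales presentation of $I_n^r/I_n^{r+1}$ as a quotient of the polynomial ring $A_n = \Z[Y_\ell : \ell\mid n]$ and keep track of which multidegrees contribute — but the opening claim is too strong and does not follow from Hales. You assert that $I_n^r/I_n^{r+1}$ decomposes as a direct sum over all multiindices $(r_\ell)$ with $\sum r_\ell = r$ of the tensor products $\bigotimes_\ell I_\ell^{r_\ell}/I_\ell^{r_\ell+1}$, and attribute this to Hales. In fact Hales' Corollary 2 identifies $I_n^r/I_n^{r+1}$ with the degree-$r$ part of $A_n/(J_n + J_n')$, where $J_n$ is generated by $(\ell-1)Y_\ell$ and $J_n'$ is a nontrivial ideal of additional homogeneous relations. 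Your proposed direct sum would amount to saying these extra relations are vacuous (or at least multiindex-diagonal), which is not what Hales proves and is false in general: the $J_n'$ relations can and do mix distinct multiindices, as long as every monomial appearing carries a repeated variable. (Already for $\Gamma = \Z/2\times\Z/2$ one has $(a-1)^2(b-1) = (a-1)(b-1)^2$ in $I^3/I^4$, collapsing the $(2,1)$ and $(1,2)$ summands.) The only thing that survives from Hales, and the only thing the paper actually uses, is property $(\star)$: every monomial occurring in an element of $J_n'$ is divisible by the square of some $Y_\ell$.

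The consequence is that parts (i), (ii), (iv) of your argument can be rescued because they only require the single squarefree monomial $\Y_n = \prod_{\ell\mid n_+} Y_\ell$ to split off cleanly — and $(\star)$ guarantees $\Y_n$ never appears in a $J_n'$ relation, so the coefficient of $\Y_n$ in $J_n+J_n'$ is exactly $\gcd(\ell-1:\ell\mid n_+)\,\Z$, giving both the splitting $\In\oplus\Jn$ and $|\In| = \gcd(\ell-1)$ needed for (iv). But your proof of (iii) genuinely breaks: without the full direct sum, ``the multiindex components of $v$'' are not well-defined, since different lifts of $v$ to $A_n$ may differ by relations in $J_n'$ that shuffle multidegrees. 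The paper handles this by choosing a representative $f$ of $v$ with the minimum number of monomials and showing inductively that $Y_\ell \mid f$ for each $\ell\mid n_+$; some such extra argument is unavoidable. You should replace the claimed direct sum by $(\star)$ as the input from Hales, keep the bookkeeping for (i), (ii), (iv), and redo (iii) via a minimality argument on representatives.
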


\begin{proof}
Let $A_n$ be the polynomial ring $\Z[Y_\ell : \ell \mid n]$ with one variable 
$Y_\ell$ for each prime $\ell$ dividing $n$.  Fix a generator $\sigma_\ell$ of 
$\Gamma_\ell$ for every $\ell$ dividing $n$, and define a map $A_n \to \Z[\Gamma_n]$ 
by sending $Y_\ell \mapsto \sigma_\ell-1$. 
By Corollary 2 of \cite{hales}, this map induces an isomorphism from the 
homogeneous degree-$r$ part of $A_n/(J_n+J_n')$ to 
$I_n^r/I_n^{r+1}$, where $J_n$ is the ideal of $A_n$ generated by 
$\{(\ell-1)Y_\ell : \ell \mid n\}$, and $J_n'$ is the ideal generated by certain other explicit 
homogeneous relations (see \cite[Lemma 2]{hales}).  
The only fact we need about these ``extra'' relations is:
\begin{equation}
\label{star}
\text{\em if $f \in J_n'$, then every monomial that occurs in $f$ is divisible by 
the square of some $Y_\ell$.}
\end{equation}
Note that $\In$ is the image in $I_n^r/I_n^{r+1}$ of the subgroup of 
$A_n/(J_n+J_n')$ generated by $\Y_n$, where $\Y_n := \prod_{\ell\mid n_+}Y_\ell$.
Similarly, $\Jn$ is the image of the subgroup generated by all other 
monomials of degree $r$.  
By \eqref{star}, $\Y_n$ does not occur in any of the relations in $J_n'$, 
and assertion (i) follows.

Assertion (ii) is clear, since $\pi_{n/d}$ kills those monomials 
that include $(\gamma - 1)$ with $\gamma \in \Gamma_\ell$ for $\ell$ dividing $d$, 
and leaves the other monomials unchanged.

Fix $v \in I_n^r/I_n^{r+1}$.  
If $v \in \In$ and $\ell \mid n_+$, then $\pi_{n/\ell}(v) = 0$ by (ii).  
Conversely, suppose that $\pi_{n/\ell}(v) = 0$ 
for every $\ell$ dividing $n_+$.  Choose $f \in A_n$ homogeneous of degree $r$ 
representing $v$, and suppose $f$ has the minimum number of monomials among 
all representatives of $v$.  We will show that $\Y_n \mid f$, and hence 
$v \in \In$. 

Fix a prime $\ell$ dividing $n_+$.  The map 
$\pi_{n/\ell}: \Z[\Gamma_n] \onto \Z[\Gamma_{n/\ell}] \hookto \Z[\Gamma_n]$ corresponds to 
the map $A_n \to A_n$ defined by setting $Y_\ell = 0$.
Since $\pi_{n/\ell}(v) = 0$, substituting $Y_\ell = 0$ in $f$ gives a 
relation in $J_n+J_n'$, i.e., $f = Y_\ell\cdot g + h$ where $g$ is 
homogeneous of degree $r-1$, $h \in J_n+J_n'$, and $Y_\ell$ does not occur in $h$.  
But then $Y_\ell\cdot g$ represents $v$, so the minimality assumption on $f$ 
implies that $h = 0$.  Therefore $Y_\ell \mid f$ for every $\ell$ dividing $n_+$, 
so $\Y_n\mid f$ and $v \in \In$.  This proves (iii).

Let $g := \gcd(\{\ell-1 : \ell\mid n_+\})$.  
Then $g \Y_n \in J_n$.
It follows from \eqref{star} that the monomial $\Y_n$ only 
occurs in elements of $J_n+J_n'$ with coefficients divisible by $g$.  
Therefore $\In$ is cyclic of order $g$, and so is $\otimes_{\ell\mid n_+}\Gamma_\ell$.
Clearly the map $\otimes_{\ell\mid n_+}\Gamma_\ell \to \In$ of (iv) is surjective, 
so it must be an isomorphism.
\end{proof}

If $v \in I_n^r/I_n^{r+1}$, let $\projI{n}{v}$ denote the projection of $v$ to 
$\In$ under the splitting of Proposition \ref{crit}(i).  
We will use the following lemma without explicit reference in some of our computations
in \S\ref{prekssect} and \S\ref{rpks}.  Its proof is left as an exercise.

\begin{lem}
Suppose $d\mid n$, $v \in \I_{n/d}$, and $w \in I_n^{r(d)}/I_n^{r(d)+1}$.  Then 
$$
\projI{n}{vw} = \projI{n}{v\pi_d(w)} = v \projI{d}{\pi_d(w)}.
$$
\end{lem}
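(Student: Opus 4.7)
The plan is to prove both equalities by returning to the Hales-style polynomial model used in the proof of Proposition \ref{crit}, where elements of $I_n^r/I_n^{r+1}$ are represented by degree-$r$ homogeneous polynomials in the variables $Y_\ell$, $\ell\mid n$, modulo the relations $J_n+J_n'$. Under this model, $\In$ is the subgroup generated by $\mathbf{Y}_n := \prod_{\ell\mid n_+} Y_\ell$, and $\Jn$ is generated by all other degree-$r$ monomials; splitting off the coefficient of $\mathbf{Y}_n$ computes $\projI{n}{\,\cdot\,}$. Since $n$ is squarefree, writing $m := n/d$ we have the disjoint decomposition $n_+ = m_+\cdot d_+$, so $r(n) = r(m)+r(d)$ and $\mathbf{Y}_n = \mathbf{Y}_m\cdot \mathbf{Y}_d$. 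Also, $\pi_d$ corresponds under the Hales presentation to the substitution $Y_\ell\mapsto 0$ for $\ell\mid m$, and the embedding $I_m^{\bullet}/I_m^{\bullet+1}\hookto I_n^{\bullet}/I_n^{\bullet+1}$ is simply inclusion of polynomial rings.

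For the first equality, I decompose $w=\pi_d(w)+(w-\pi_d(w))$. Every monomial appearing in $w-\pi_d(w)$ involves at least one factor $Y_\ell$ with $\ell\mid m$. Multiplying by $v$, which is a $\Z$-linear combination of $\mathbf{Y}_m$ (times elements of $\Z$, since $\I_m$ is cyclic, generated by $\mathbf{Y}_m$), produces a degree-$r(n)$ monomial that already contains every $Y_\ell$ with $\ell\mid m_+$; adjoining an extra $Y_\ell$ with $\ell\mid m$ either repeats a factor (if $\ell\mid m_+$) or introduces a prime in $n/n_+$ (if $\ell\mid m/m_+$). Either way the resulting monomial is not $\mathbf{Y}_n$, so $v(w-\pi_d(w))$ lies in $\Jn$ and projects to $0$ in $\In$. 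Hence $\projI{n}{vw}=\projI{n}{v\pi_d(w)}$.

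For the second equality, I decompose $\pi_d(w)=\projI{d}{\pi_d(w)}+u$ with $u\in \Jd$. On the first piece, $v\cdot \projI{d}{\pi_d(w)}\in \I_m\cdot\I_d$, and the identification $\In = \I_m\I_d$ recorded in the definition of $\In$ shows this product already lies in $\In$, so its projection to $\In$ is itself. On the second piece, $u$ is a sum of degree-$r(d)$ monomials in the $Y_\ell$ with $\ell\mid d$, none of which equals $\mathbf{Y}_d$; multiplying by $v$ (a multiple of $\mathbf{Y}_m$) yields degree-$r(n)$ monomials whose $d$-part is not $\mathbf{Y}_d$, and thus whose full expression is not $\mathbf{Y}_n$. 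So $vu\in \Jn$ and $\projI{n}{vu}=0$, giving the claimed formula.

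There is essentially no obstacle beyond careful bookkeeping: the substance is already contained in Proposition \ref{crit} and the observation $n_+ = m_+ d_+$. The only mild care needed is to verify, in each of the two ``error'' terms, that the resulting monomials fail to be $\mathbf{Y}_n$ either by repetition or by involving a prime of $n/n_+$; both are immediate from the construction.
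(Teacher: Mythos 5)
The paper leaves this lemma's proof ``as an exercise,'' so there is no argument to compare against directly. Your proof, working in the Hales polynomial model $A_n/(J_n+J_n')$ introduced in the proof of Proposition~\ref{crit}, is correct and is the natural argument the paper implicitly has in mind: once one represents $v$ as an integer multiple of $\mathbf{Y}_m$ (using that $\I_m$ is cyclic with generator $\mathbf{Y}_m$), interprets $\pi_d$ as the substitution $Y_\ell\mapsto 0$ for $\ell\mid n/d$, and observes $\mathbf{Y}_n=\mathbf{Y}_m\mathbf{Y}_d$ with unique factorization of monomials, both equalities reduce to the bookkeeping you carry out, and your case analysis (repeated variable vs.\ inert variable in the first step; non-$\mathbf{Y}_d$ factor in the second) is complete.
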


\section{Kolyvagin systems}
\label{kssect}

Fix an odd prime $p$.  To prove Theorem \ref{mainthm} we need to 
introduce Kolyvagin systems, as defined in \cite{kolysys}.  (See in particular 
\cite[\S6.1]{kolysys}, and also \cite{babykolysys}, for the case of Kolyvagin systems 
associated to even Dirichlet characters that we use here.) 

Let $\hat{F}^\times$ denote the $p$-adic completion of $F^\times$.  
Similarly, for every rational prime $\ell$ 
let $F_\ell := F \otimes \Ql$, $\O_\ell := \O_F \otimes \Zl$, 
and define $\hat{F}^\times_\ell$ and $\hat{\O}^\times_\ell$
to be their $p$-adic completions.
We define the ``finite subgroup'' $\hat{F}^\times_{\ell,\f}$ 
to be the ``unit part'' of $\hat{F}^\times_\ell$
$$
\hat{F}^\times_{\ell,\f} := \hat{\O}_\ell^\times \subset \hat{F}^\times_\ell.
$$

If $\ell = \lambda{\lambda^\tau}$ splits in $F$,
define the ``transverse subgroup'' 
$\hat{F}^\times_{\ell,\tr} \subset \hat{F}^\times_\ell$ to be the 
(closed) subgroup generated by $(\ell,1)$ and $(1,\ell)$, where we identify $F_\ell^\times$ 
with $F_\lambda^\times \times F_{\lambda^\tau}^\times \cong \Ql^\times \times \Ql^\times$.  
Then we have a canonical 
splitting 
$\hat{F}^\times_\ell = \hat{F}^\times_{\ell,\f} \times \hat{F}^\times_{\ell,\tr}$, 
and since $p$ is odd
\begin{equation}
\label{ftrs}
(\hat{F}^\times_\ell)^- 
   = (\hat{F}^\times_{\ell,\f})^- \times (\hat{F}^\times_{\ell,\tr})^-.
\end{equation}

\begin{defn}
\label{phifsdef}
If $\ell \ne p$ splits in $F$, define the {\em finite-singular isomorphism}
$$
\phifs_\ell : (\hat{F}^\times_{\ell,\f})^- \isom (\hat{F}^\times_{\ell,\tr})^- \otimes I_\ell/I_\ell^2
$$
by 
\begin{align*}
\phifs_\ell(x) &= (\ell,1) \otimes ([x_\lambda,F_\lambda(\bmu_\ell)/F_\lambda]-1) 
   + (1,\ell) \otimes ([x_{\lambda^\tau},F_{\lambda^\tau}(\bmu_\ell)/F_{\lambda \tau}]-1) \\
   &= (\ell,\ell^{-1}) \otimes ([x_\lambda,F_\lambda(\bmu_\ell)/F_\lambda]-1)
\end{align*}
where 
$x = (x_\lambda,x_{\lambda^\tau}) 
   \in \hat{F}_\lambda^\times \times \hat{F}_{\lambda^\tau}^\times
   = \hat{\Q}_\ell^\times \times \hat{\Q}_\ell^\times$ 
   with $x_{\lambda^\tau} = x_\lambda^{-1} \in \hat{\Z}_\ell^\times$, 
and $[\;\cdot\;,F_\lambda(\bmu_\ell)/F_\lambda]$ is the local Artin symbol.
(Concretely, note that if $u \in \Zl^\times$ then $[u,F_\lambda(\bmu_\ell)/F_\lambda]$ 
is the automorphism in $\Gamma_\ell$ that sends $\zeta_\ell$ to $\zeta_\ell^{u^{-1}}$.)
Then $\phifs_\ell$ is a well-defined isomorphism 
(both the domain and range are free of rank one over $\Zp/(\ell-1)\Zp$), 
independent of the choice of $\lambda$ versus $\lambda^\tau$.
\end{defn}

\begin{defn}
\label{ksdef}
Let $\N_p := \{n \in \N : p \nmid n\}$.  
A {\em Kolyvagin system} $\ksys$ 
(for the Galois representation $\Zp(1) \otimes \omega_F$) is a collection
$$
\{\kappa_n \in (\hat{F}^\times)^- \otimes \In : n \in \N_p\}
$$
satisfying the following properties for every rational prime $\ell$.  
Let $(\kappa_n)_\ell$ denote the image of $\kappa_n$ in 
$(\hat{F}^\times_\ell)^- \otimes \In$.
\begin{enumerate}
\item
If $\ell \nmid n$, then 
$(\kappa_n)_\ell \in (\hat{F}^\times_{\ell,\f})^- \otimes \In$.
\item
If $\ell \mid n_+$, then
$(\kappa_n)_\ell = (\phifs_\ell \otimes 1)(\kappa_{n/\ell,\ell})$.
\item
If $\ell \mid n/n_+$, then $\kappa_n = \kappa_{n/\ell}$.
\end{enumerate}
Let $\KS(F)$ denote the $\Zp$-module of Kolyvagin systems 
for $\Zp(1) \otimes \omega_F$.
\end{defn}

\begin{rem}
Let $\N_p^+ := \{n \in \N_p : \text{all $\ell \mid n$ split in $F/\Q$}\}$.
In \cite{kolysys}, a Kolyvagin system was defined to be a collection of classes 
$\{\kappa_n \in (\hat{F}^\times_\ell)^-\otimes (\otimes_{\ell \mid n} \Gamma_\ell) : n \in \N_p^+\}$, 
and $\phifs_\ell$ took values in $(\hat{F}^\times_{\ell,\tr})^- \otimes \Gamma_\ell$.  
We use Proposition \ref{crit}(iv) to replace $\otimes_{\ell \mid n_+} \Gamma_\ell$ by 
$\In$ and \eqref{caug} to replace $\Gamma_\ell$ by $I_\ell/I_\ell^2$, 
which will be more convenient for our purposes here.  Also, 
a Kolyvagin system $\{\kappa_n : n \in \N_p^+\}$ as in \cite{kolysys} extends uniquely to 
$\{\kappa_n : n \in \N_p\}$ simply by setting $\kappa_n := \kappa_{n_+}$ for $n \in \N_p - \N_p^+$.

\end{rem}

The following theorem is the key to our proof of Theorem \ref{mainthm}.

\begin{thm}
\label{rankone}
Suppose $\ksys, \ksys' \in \KS(F)$.  
If $\kappa_1 = \kappa'_1$, then $\kappa_n = \kappa'_n$ for every $n \in \N_p$.
\end{thm}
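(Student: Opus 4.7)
The plan is to deduce this statement from the main structure theorem for Kolyvagin systems proved in \cite{kolysys}. By linearity of the axioms in Definition \ref{ksdef}, the difference $\ksys - \ksys'$ again belongs to $\KS(F)$, so it suffices to prove that the evaluation map
$$
\KS(F) \too (\hat{F}^\times)^-,\quad \ksys \mapsto \kappa_1,
$$
is injective.

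First I would translate to the setup of \cite{kolysys}. By the remark following Definition \ref{ksdef}, restriction to $\N_p^+$, combined with Proposition \ref{crit}(iv) (the isomorphism $\otimes_{\ell\mid n_+}\Gamma_\ell \isom \In$) and \eqref{caug} (the isomorphism $\Gamma_\ell \isom I_\ell/I_\ell^2$), identifies our $\KS(F)$ with the $\Zp$-module of Kolyvagin systems for $\Zp(1)\otimes\omega_F$ in the sense of \cite{kolysys}; the finite-singular isomorphism $\phifs_\ell$ of Definition \ref{phifsdef} corresponds to the one used there. Under this identification, the main theorem of \cite{kolysys} (in the form adapted to even Dirichlet characters in \cite{babykolysys}) asserts that $\KS(F)$ is a free $\Zp$-module of rank one, generated by a canonical cyclotomic unit Kolyvagin system $\ksys^{\mathrm{cyc}}$ whose value at $n=1$ is, up to a $\Zp^\times$-scalar, the image of $\alpha_1$ in $(\hat{F}^\times)^-$. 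By Proposition \ref{n=1} and Dirichlet's class number formula, $\alpha_1$ generates a subgroup of finite index in $(\O_F^\times)^-/\{\pm1\}$, so since $p$ is odd its image in $(\hat{F}^\times)^-$ has infinite order. Thus $\ksys \mapsto \kappa_1$ is injective on the free rank-one module $\KS(F)$, giving the theorem.

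The substantive content, imported as a black box from \cite{kolysys}, has two ingredients: the core rank computation showing that the Selmer structure with local conditions $(\hat{F}^\times_{\ell,\f})^-$ has core rank one for $\Zp(1)\otimes\omega_F$; and the propagation argument establishing rank-one freeness. The propagation proceeds by induction on $r(n_+)$, where at each stage a Chebotarev-type density argument produces an auxiliary split prime $\ell \nmid n$ whose localization detects any putative nontriviality of $\kappa_n$ in the ``transverse'' direction via $\phifs_\ell$, combined with Poitou--Tate global duality to bound the relevant Selmer groups. The main obstacle in a fully self-contained proof would be the core rank computation together with this Chebotarev-detection lemma; in the present paper these are invoked from \cite{kolysys} without further comment.
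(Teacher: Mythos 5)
Your overall strategy — reduce to injectivity of $\ksys \mapsto \kappa_1$ on the free rank-one $\Zp$-module $\KS(F)$, then exhibit nonvanishing at $n=1$ — is the same as the paper's, but the key nonvanishing step is handled differently, and your version contains a misattribution that leaves a gap.

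You assert that \cite{kolysys} (or \cite{babykolysys}) proves that $\KS(F)$ is generated by the cyclotomic unit Kolyvagin system $\ksys^{\mathrm{cyc}}$. That is not what those references prove. Theorem 5.2.10(ii) of \cite{kolysys} gives abstract freeness of rank one; it does not identify the generator, and the statement that $\ksys^{\mathrm{cyc}}$ is a \emph{generator} (i.e.\ primitive) is essentially of Main-Conjecture strength and is not established there. The paper instead invokes Theorem 5.2.12(v) of \cite{kolysys}: for a nonzero Kolyvagin system, the vanishing of $\kappa_1$ is controlled by the relevant Selmer group, which here is finite because the class group of $F$ is finite; hence $\kappa_1 \ne 0$. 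Combined with torsion-freeness of $(\hat F^\times)^-$, this gives the theorem without any reference to cyclotomic units and without needing to know what the generator is.

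Your argument is repairable, though not as written. What you actually need is only that \emph{some} nonzero element of $\KS(F)$ has nonvanishing $\kappa_1$. Granting the existence of the cyclotomic unit Kolyvagin system $\ksys^{\mathrm{cyc}} \in \KS(F)$ (constructed in the appendix of \cite{babykolysys}, independently of this paper) and the Dirichlet computation that its value at $n=1$ is $\alpha_1 = \pm(\epsilon/\epsilon^\tau)^{-h_F} \ne 0$, one writes $\ksys^{\mathrm{cyc}} = c\,\mathbf{g}$ for a generator $\mathbf{g}$ and concludes $\mathbf{g}$ also has nonvanishing value at $n=1$, whence the evaluation map is injective. If you take that route you should make the deduction explicit rather than claiming $\mathbf{g} = \ksys^{\mathrm{cyc}}$. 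The paper's use of Theorem 5.2.12(v) is cleaner for this purpose, and also avoids a mild circularity within the paper, since the pre-Kolyvagin-system machinery of \S\S6--7 (which is what this paper uses to verify that $2^{-s(n)}\tilde\theta'_n$ is a Kolyvagin system) is itself built on Theorem \ref{rankone}.
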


\begin{proof}
We follow \S6.1 of \cite{kolysys}, with $R = \Zp$, $\rho = \omega_F$, 
$T = \Zp(1) \otimes \omega_F$, 
and with the Selmer structure denoted $\cF$ in \cite{kolysys}.  
By Lemma 6.1.5 and Proposition 6.1.6 of \cite{kolysys}, the hypotheses needed to 
apply the results of \S5.2 of \cite{kolysys} all hold, and the core rank of $T$ is $1$.  

By Theorem 5.2.10(ii) of \cite{kolysys}, $\KS(F)$ is a free $\Zp$-module 
of rank one.  Therefore (switching $\ksys$ and $\ksys'$ if necessary) 
there is an $a \in \Zp$ such that $\ksys' = a\ksys$, i.e., $\kappa'_n = a\kappa_n$ 
for every $n \in \N_p$.  If $\ksys$ is identically zero, then so is $\ksys'$ and 
we are done.  If $\ksys$ is not identically zero, then (since the ideal class group 
of $F$ is finite) Theorem 5.2.12(v) of \cite{kolysys} shows that $\kappa_1 \ne 0$.
Since $\kappa'_1 = \kappa_1$ in the torsion-free  
$\Zp$-module $(\hat{F}^\times)^-$ (in fact property (i) above shows that 
$\kappa_1 \in (\O_F^\times \otimes \Zp)^-$), we must have $a = 1$.
\end{proof}

\section{Pre-Kolyvagin systems}
\label{prekssect}

Keep the fixed odd prime $p$.  
The right-hand and left-hand sides of Conjecture \ref{dconj} are ``almost''
Kolyvagin systems.  If they were Kolyvagin systems, then since they agree when 
$n=1$ (Proposition \ref{n=1}), they would agree for all $n$ by Theorem \ref{rankone}, 
and Theorem \ref{mainthm} would be proved.  

In this section we define what we call ``pre-Kolyvagin systems'', 
and show that a pre-Kolyvagin system can be transformed into a Kolyvagin system.  
Using Theorem \ref{rankone}, we deduce (Corollary \ref{prekskscor} below) that if 
two pre-Kolyvagin systems agree when $n=1$, then they agree for every $n$.  
In \S\ref{cupks} and \S\ref{rpks}, respectively, we will show that the left- and right-hand sides 
of Conjecture \ref{dconj} are pre-Kolyvagin systems.  Then Theorem \ref{mainthm} will 
follow from Corollary \ref{prekskscor} and Proposition \ref{n=1}.

If $x \in (\hat{F}^\times)^- \otimes I_n^r/I_n^{r+1}$, let 
$x_\ell$ denote the image of 
$x$ in $(\hat{F}^\times_{\ell})^- \otimes I_n^r/I_n^{r+1}$, 
and if $\ell \in \N_p$ splits in $F/\Q$, let 
$x_{\ell,\f} \in (\hat{F}^\times_{\ell,\f})^- \otimes I_n^r/I_n^{r+1}$ and 
$x_{\ell,\tr}\in (\hat{F}^\times_{\ell,\tr})^- \otimes I_n^r/I_n^{r+1}$ 
denote the projections of $x_\ell$ induced by the splitting \eqref{ftrs}.  
Let $\projI{n}{x} \in (\hat{F}^\times)^- \otimes \In$ denote the projection 
of $x$ induced by the splitting of Proposition \ref{crit}(i), 
and similarly for $\projI{n}{x_\ell}$ and $\projI{n}{x_{\ell,\f}}$.

\begin{defn}
\label{Mdef}
If $n \in \N$ and $d = \prod_{i=1}^t \ell_i$ divides $n_+$, let $M_{n,d} = (m_{ij})$ 
be the $t \times t$ matrix with entries in $I_n/I_n^2$
$$
m_{ij} = 
\begin{cases}
\pi_{n/d}(\Frob_{\ell_i}-1) & \text{if $i = j$},\\
\pi_{\ell_j}(\Frob_{\ell_i}-1) & \text{if $i \ne j$}.
\end{cases}
$$
We let $M_d := M_{d,d}$, where $\pi_1(\Frob_\ell-1)$ is understood to be zero, 
so that all diagonal entries of $M_d$ are zero.  Define
$$
\D_{n,d} := \det(M_{n,d}) \in I_n^t/I_n^{t+1}, \quad 
   \D_d := \det(M_d) \in \I_d \subset I_n^t/I_n^{t+1}.
$$
By convention we let $\D_{n,1} = \D_1 = 1$.
Note that $\D_{n,d}$ and $\D_d$ are 
independent of the ordering of the prime factors of $d$.
\end{defn}

\begin{defn}
\label{preksdef}
A {\em pre-Kolyvagin system} $\ksys$ 
(for $\Zp(1) \otimes \omega_F$) is a collection
$$
\{\kappa_n \in (\hat{F}^\times)^- \otimes I_n^r/I_n^{r+1} : n \in \N_p\}
$$
where $r=r(n)$,
satisfying the following properties for every rational prime $\ell$:  
\begin{enumerate}
\item
If $\ell \nmid n$, then 
$(\kappa_{n})_\ell \in (\hat{F}^\times_{\ell,\f})^- \otimes I_n^r/I_n^{r+1}$.
\item
If $\ell \mid n_+$, then 
$
(1 \otimes \pi_{n/\ell})\kappa_n = \kappa_{n/\ell}\,\pi_{n/\ell}(1-\Frob_\ell).
$
\item
If $\ell \mid n_+$, then
$\projI{n}{(\kappa_{n})_{\ell,\tr}} = (\phifs_\ell \otimes 1)(\projI{n/\ell}{(\kappa_{n/\ell})_\ell})$.
\item
If $\ell \mid n_+$, then $\sum_{d \mid n_+} \projI{n/d}{(\kappa_{n/d})_{\ell,\f}}\,\D_d = 0$.
\item
If $\ell \mid n/n_+$, then $\projI{n}{\kappa_{n}} = \projI{n/\ell}{\kappa_{n/\ell}}$.

\end{enumerate}
Let $\preKS(F)$ denote the $\Zp$-module of pre-Kolyvagin systems 
for $\Zp(1) \otimes \omega_F$.

\end{defn}

\begin{defn}
\label{pktok}
If $\ksys = \{\kappa_n : n \in \N_p\}$ is a pre-Kolyvagin system, 
define $\tilde{\ksys} = \{\tilde\kappa_n : n \in \N_p\}$ by
$$
\tilde\kappa_n := \sum_{d \mid n_+} \kappa_{n/d}\,\D_{n,d}.
$$
\end{defn}

\begin{lem}
\label{detMlem}
Suppose $n \in \N_p$ and $d \mid n$.
\begin{enumerate}
\item
If $\ell \mid d$ then $\pi_{n/\ell}(\D_{n,d}) = \pi_{n/d}(\Frob_\ell-1)\D_{n/\ell,d/\ell}$.
\item
If $\ell \nmid d$ then $\pi_{n/\ell}(\D_{n,d}) = \D_{n/\ell,d}$.
\item
$\pi_{d}(\D_{n,d}) = \D_d \in \I_d$.
\end{enumerate}
\end{lem}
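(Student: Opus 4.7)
The plan is to use the fact that for each $m \mid n$, the map $\pi_m$ is a ring endomorphism of $\Z[\Gamma_n]$. Indeed, since $n$ is squarefree, $\Gamma_n = \Gamma_m \times \Gamma_{n/m}$, and $\pi_m$ is the idempotent ring projection onto the subring $\Z[\Gamma_m] \subset \Z[\Gamma_n]$. Consequently $\pi_m$ induces a ring endomorphism of the associated graded ring $\bigoplus_{k\ge 0} I_n^k/I_n^{k+1}$, and in particular commutes with the formation of the determinant of any matrix with entries in $I_n/I_n^2$. Each of the three assertions therefore reduces to identifying $\pi_m(M_{n,d})$ entry-wise with the claimed matrix.

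For (i), write $\ell = \ell_k$. In column $k$ of $M_{n,d}$, every off-diagonal entry has the form $\pi_\ell(\Frob_{\ell_i}-1) \in I_\ell/I_\ell^2$, and is annihilated by $\pi_{n/\ell}$ because $\Gamma_\ell$ projects trivially to $\Gamma_{n/\ell}$. Expanding $\pi_{n/\ell}(\D_{n,d}) = \det(\pi_{n/\ell}(M_{n,d}))$ along the $k$-th column therefore leaves only one nonzero contribution: $\pi_{n/\ell}(m_{kk})$ times the determinant of the $(k,k)$-minor. Since $\ell \mid d$ forces $n/d \mid n/\ell$, one has $\pi_{n/\ell}(m_{kk}) = \pi_{n/d}(\Frob_\ell-1)$. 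The surviving entries in the minor are fixed by $\pi_{n/\ell}$---diagonal entries because $n/d \mid n/\ell$, off-diagonal entries $\pi_{\ell_{j'}}(\Frob_{\ell_{i'}}-1)$ because $\ell_{j'} \ne \ell$---so the minor equals $M_{n/\ell, d/\ell}$, giving (i).

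For (ii) and (iii), the same kind of entry-wise check suffices. For (ii), off-diagonal entries of $M_{n,d}$ are fixed by $\pi_{n/\ell}$ (since $\ell_j \ne \ell$ when $\ell \nmid d$), while on the diagonal one uses $\pi_{n/\ell}\circ\pi_{n/d} = \pi_{\gcd(n/\ell,n/d)} = \pi_{(n/\ell)/d}$, which holds because $\gcd(\ell,d)=1$; this gives $\pi_{n/\ell}(M_{n,d}) = M_{n/\ell,d}$. For (iii), squarefreeness gives $\gcd(d, n/d) = 1$, so $\pi_d \circ \pi_{n/d} = \pi_1$ vanishes on augmentation elements; hence the diagonal of $\pi_d(M_{n,d})$ is zero, in accordance with the convention for $M_d$, while the off-diagonal entries are preserved since $\ell_j \mid d$. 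This shows $\pi_d(\D_{n,d}) = \D_d$. For the containment $\D_d \in \I_d$, expand the determinant: with zero diagonal, only derangement permutations $\sigma$ contribute, and each resulting product $\prod_i \pi_{\ell_{\sigma(i)}}(\Frob_{\ell_i}-1)$ contains exactly one factor from each $I_\ell/I_\ell^2$ with $\ell \mid d$, hence lies in $\I_d$ by Proposition \ref{crit}(iv). No serious obstacle is expected; the work is essentially bookkeeping around compositions of the $\pi_m$'s, which all simplify cleanly via the direct-product decomposition of $\Gamma_n$.
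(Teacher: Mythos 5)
Your proof is correct and takes essentially the same approach as the paper: the paper's proof of (i) is exactly the observation that in $\pi_{n/\ell}(M_{n,d})$ the column corresponding to $\ell$ has a single surviving entry $\pi_{n/d}(\Frob_\ell-1)$ on the diagonal, with (ii) and (iii) left as direct checks from the definition. You have simply filled in the bookkeeping the paper omits, including the useful framing that $\pi_m$ is a (graded) ring endomorphism and hence commutes with determinants, and the identity $\pi_{m_1}\circ\pi_{m_2}=\pi_{\gcd(m_1,m_2)}$.
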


\begin{proof}
Suppose $\ell \mid d$.  The column of $\pi_{n/\ell}(M_{n,d})$ corresponding to 
$\ell$ consists of all zeros except for $\pi_{n/d}(\Frob_\ell-1)$ on the diagonal.  The 
first assertion follows from this, and (ii) and (iii) follow directly from the definition.
\end{proof}

\begin{prop}
\label{preksks}
The map $\ksys \mapsto \tilde\ksys$ of Definition \ref{pktok} 
is a $\Zp$-module isomorphism $\preKS(F) \isom \KS(F)$ between free $\Zp$-modules of rank one.
\end{prop}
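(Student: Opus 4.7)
The plan is to verify in turn that $\T\colon \ksys \mapsto \tilde\ksys$ sends $\preKS(F)$ into $\KS(F)$, is injective, and is surjective; the rank-one assertion then follows from Theorem \ref{rankone}.

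For well-definedness I first need $\tilde\kappa_n\in(\hat F^\times)^-\otimes\In$, which by Proposition \ref{crit}(iii) amounts to $\pi_{n/\ell}(\tilde\kappa_n)=0$ for each $\ell\mid n_+$. I would split $\sum_{d\mid n_+}\kappa_{n/d}\D_{n,d}$ into the contributions with $\ell\mid d$ and $\ell\nmid d$, use Lemma \ref{detMlem}(i,ii) to evaluate $\pi_{n/\ell}(\D_{n,d})$ in each case, and use pre-Kolyvagin axiom (ii) to rewrite $\pi_{n/\ell}(\kappa_{n/d})$; the two half-sums then cancel under the reindexing $d\leftrightarrow d\ell$. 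With $\tilde\kappa_n\in\In$ in hand, I would check the three axioms of Definition \ref{ksdef}: axiom (i) is inherited directly from pre-Kolyvagin axiom (i); for axiom (ii), I would split $(\tilde\kappa_n)_\ell$ into finite and transverse parts via \eqref{ftrs}, verify that the transverse part equals $(\phifs_\ell\otimes 1)(\tilde\kappa_{n/\ell,\ell})$ using pre-Kolyvagin axiom (iii) together with Lemma \ref{detMlem}(iii) (which identifies $\pi_d(\D_{n,d})=\D_d$), and observe that the vanishing of the finite part after projecting to $\In$ is exactly pre-Kolyvagin axiom (iv); for axiom (iii), with $\ell\mid n/n_+$, the matrices $M_{n,d}$ and $M_{n/\ell,d}$ become equal after projection to $\In=\I_{n/\ell}$ (their difference lies in $\Gamma_\ell$-contributions that are killed there), and the claim reduces to pre-Kolyvagin axiom (v).

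Injectivity of $\T$ is clean: by induction on $r(n)$, $r(n)=0$ forces $\kappa_n=\tilde\kappa_n=0$, and for $r(n)>0$ the identity $\kappa_n=\tilde\kappa_n-\sum_{1\neq d\mid n_+}\kappa_{n/d}\D_{n,d}$ combined with the inductive hypothesis on the strictly smaller $r(n/d)$ forces $\kappa_n=0$.

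For surjectivity, starting from $\ksys'\in\KS(F)$ I would define $\ksys$ via the same recursion $\kappa_n:=\kappa'_n-\sum_{1\neq d\mid n_+}\kappa_{n/d}\D_{n,d}$ and verify that $\ksys$ is a pre-Kolyvagin system by essentially reversing the arguments above. Axioms (i) and (v) follow from the corresponding properties of $\ksys'$, and axiom (ii) comes from comparing $\pi_{n/\ell}(\kappa_n)$ computed from the recursion with the recursion for $\kappa_{n/\ell}$ via Lemma \ref{detMlem}. The main obstacle is the joint verification of axioms (iii) and (iv): these must be extracted from the Kolyvagin relation $\sum_{d\mid n_+}\kappa_{n/d}\D_{n,d}=\kappa'_n\in(\hat F^\times)^-\otimes\In$ by separating the $(\hat F^\times_\ell)^-$-component at a chosen $\ell\mid n_+$ into finite and transverse parts and matching term by term, with Kolyvagin axiom (ii) for $\ksys'$ supplying exactly the combinatorial identity needed.
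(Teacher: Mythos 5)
Your proposal follows the paper's proof almost verbatim for the forward direction (the reindexing $d\leftrightarrow d\ell$ together with Lemma~\ref{detMlem}(i),(ii) to show $\pi_{n/\ell}(\tilde\kappa_n)=0$, then the transverse/finite decomposition to recover the Kolyvagin axioms from pre-Kolyvagin axioms (iii) and (iv) via Lemma~\ref{detMlem}(ii),(iii), and the $\D_{n,d}\mapsto\D_d$ collapse for $\ell\mid n/n_+$) and for injectivity by induction. The one place you diverge is surjectivity: the paper defines $\ksys$ from $\tilde\ksys$ by the same inverse recursion you write down, but then simply asserts ``it is straightforward to check that $\ksys$ is a pre-Kolyvagin system'' and omits that check, noting it is not used elsewhere in the paper; you instead sketch how to actually carry it out, correctly identifying that Kolyvagin axiom (ii) for $\ksys'$, combined with the inductive hypothesis that the $\kappa_{n/d}$ already satisfy the pre-Kolyvagin axioms, supplies the needed cancellation. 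That sketch is viable and fills in more than the paper does, though it is stated at a high level. Two small imprecisions worth flagging: the citation of Lemma~\ref{detMlem}(iii) is really needed for the finite part (property (d) in the paper), not the transverse part, where Lemma~\ref{detMlem}(ii) does the work; and the rank-one assertion comes from \cite[Theorem~5.2.10(ii)]{kolysys} (cited inside the proof of Theorem~\ref{rankone}), not from the statement of Theorem~\ref{rankone} itself.
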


\begin{proof}
The $\Zp$-linearity is clear.  The injectivity is clear as well, since it follows easily 
by induction that if $\tilde\kappa_n = 0$ for all $n$, then $\kappa_n = 0$ for all $n$.  

We next show that if $\ksys$ is a pre-Kolyvagin system, then $\tilde\ksys$ is 
a Kolyvagin system.  In other words, we need to show for every $n \in \N_p$ that 
\begin{enumerate}
\renewcommand{\theenumi}{(\alph{enumi})}
\item
$\tilde\kappa_n \in (\hat{F}^\times)^- \otimes \In$, 
\item
if $\ell \nmid n$ then $(\tilde\kappa_n)_{\ell} \in (\hat{F}^\times_{\ell,\f})^- \otimes \In$,
\item
if $\ell \mid n_+$ then $(\tilde\kappa_n)_{\ell,\tr} = (\phifs_\ell \otimes 1)((\kappa_{n/\ell})_\ell)$,
\item
if $\ell \mid n_+$ then $(\tilde\kappa_n)_{\ell,\f} = 0$,
\item
if $\ell \mid n/n_+$ then $\tilde\kappa_n = \tilde\kappa_{n/\ell}$.
\end{enumerate}

Fix $n \in \N_p$, and suppose that $\ell \mid n_+$.  Then
\begin{align*}
(1 \otimes \pi_{n/\ell})(\tilde\kappa_n) 
   &= \sum_{d \mid n_+, \ell\nmid d} (1 \otimes \pi_{n/\ell})(\kappa_{n/d} \D_{n,d}) 
      + \sum_{d \mid n_+,\ell\mid d} (1 \otimes \pi_{n/\ell})(\kappa_{n/d} \D_{n,d}) \\
   &= \sum_{d \mid (n_+/\ell)} \kappa_{n/(d\ell)} \,\pi_{n/\ell}(\D_{n,d\ell})
      + (1 \otimes \pi_{n/(d\ell)})(\kappa_{n/d}) \pi_{n/\ell}(\D_{n,d}).
\end{align*}
Fix a divisor $d$ of $n_+/\ell$.  By Lemma \ref{detMlem}(i), 
$$
\kappa_{n/(d\ell)} \,\pi_{n/\ell}(\D_{n,d\ell}) = 
   \kappa_{n/(d\ell)}\,\pi_{n/(d\ell)}(\Frob_\ell-1) \D_{n/\ell,d}.
$$
Also, 
$(1 \otimes \pi_{n/(d\ell)})(\kappa_{n/d}) = \kappa_{n/(d\ell)}\,\pi_{n/(d\ell)}(1-\Frob_\ell)$
by Definition \ref{preksdef}(ii), so by Lemma \ref{detMlem}(ii)
$$
(1 \otimes \,\pi_{n/(d\ell)})(\kappa_{n/d}) \pi_{n/\ell}(\D_{n,d}) 
   = \kappa_{n/(d\ell)}\,\pi_{n/(d\ell)}(1-\Frob_\ell)\D_{n/\ell,d}.
$$
Thus $(1 \otimes \pi_{n/\ell})(\tilde\kappa_n) = 0$ 
for every $\ell$ dividing $n$.  Since $(\hat{F}^\times)^-$ is a free $\Zp$-module, 
it follows from Proposition \ref{crit}(iii) that 
$\tilde\kappa_n \in (\hat{F}^\times)^- \otimes \In$.  This is property (a) above.

By (a), and using that $\pi_d(\D_{n,d}) \in \I_d$, we have 
$$
\tilde\kappa_n = \projI{n}{\tilde\kappa_n} 
   = \sum_{d \mid n_+}\projI{n/d}{\kappa_{n/d}}\,\pi_{d}(\D_{n,d}).
$$
If $\ell \nmid n$, then property (i) of Definition \ref{preksdef} of a pre-Kolyvagin system 
shows that $\projI{n/d}{(\kappa_{n,d})_\ell} \in (\hat{F}^\times_{\ell,\f})^- \otimes \I_{n/d}$ 
for every $d$, so
$
(\tilde\kappa_n)_\ell \in (\hat{F}^\times_{\ell,\f})^- \otimes \In.
$
This is (b).

Now suppose $\ell \mid n_+$.  
For (c), using property (i) of Definition \ref{preksdef} we have
$$
(\tilde\kappa_n)_{\ell,\tr} = \sum_{d \mid n_+} (\kappa_{n/d})_{\ell,\tr}\D_{n,d}
   = \sum_{d \mid (n_+/\ell)} (\kappa_{n/d})_{\ell,\tr}\D_{n,d}. 
$$
Projecting into $\In$, and using (a), (ii) of Definition \ref{preksdef}, 
and Lemma \ref{detMlem}(ii), we have
\begin{align*}
(\tilde\kappa_n)_{\ell,\tr} &= \projI{n}{(\tilde\kappa_n)_{\ell,\tr}} 
   =  \sum_{d \mid (n_+/\ell)} \projI{n}{(\kappa_{n/d})_{\ell,\tr}\D_{n,d}} \\
   &= \sum_{d \mid (n_+/\ell)} \projI{n}{(\phifs_\ell \otimes 1)((\kappa_{n/(d\ell)})_\ell)
      \,\pi_{n/\ell}(\D_{n,d})} \\
   &= \sum_{d \mid (n_+/\ell)} 
      \projI{n}{(\phifs_\ell \otimes 1)((\kappa_{n/(d\ell)})_\ell)\D_{n/\ell,d}} \\
   &= \projI{n}{(\phifs_\ell \otimes 1)(\tilde\kappa_{n/\ell})} 
      = (\phifs_\ell \otimes 1)(\projI{n/\ell}{\tilde\kappa_{n/\ell}})
      = (\phifs_\ell \otimes 1)(\tilde\kappa_{n/\ell}).
\end{align*}
This is (c).  For (d), using (a), Lemma \ref{detMlem}(iii), and 
(iv) of Definition \ref{preksdef} we have
$$
(\tilde\kappa_n)_{\ell,\f} = \projI{n}{(\tilde\kappa_n)_{\ell,\f}} 
   = \sum_{d \mid n_+} \projI{n/d}{(\kappa_{n/d})_{\ell,\f}}\projI{d}{\pi_{d}(\D_{n,d})} 
   = \sum_{d \mid n_+} \projI{n/d}{(\kappa_{n/d})_{\ell,\f}}\D_{d} = 0.
$$

Finally, suppose that $\ell \mid n/n_+$.  Using Definition \ref{preksdef}(v) 
and property (a) above,
$$
\tilde\kappa_n = \projI{n}{\tilde\kappa_n} 
   = \sum_{d \mid n_+} \projI{n/d}{(\kappa_{n/d})}\D_{d} 
   = \sum_{d \mid (n/\ell)_+} \projI{n/(d\ell)}{(\kappa_{n/(d\ell)})}\D_{d}
= \projI{n/\ell}{\tilde\kappa_{n/\ell}} = \tilde\kappa_{n/\ell}.
$$
This completes the proof that $\tilde\ksys$ is a Kolyvagin system.

Since $\KS(F)$ is a free $\Zp$-module of rank one \cite[Theorem 5.2.10(ii)]{kolysys}, 
to complete the proof it remains only to show that the map 
$\preKS(F) \to \KS(F)$ is surjective.  
If $\tilde\ksys \in \KS(F)$, then (since $\D_{n,1} = 1$) we can define  
inductively a collection 
$\ksys := \{\kappa_n \in (\hat{F}^\times)^- \otimes I_n^r/I_n^{r+1} : n \in \N_p\}$ such that 
$\sum_{d \mid n_+} \kappa_{n/d}\,\D_{n,d} = \tilde\kappa_n$ for every $n$.  
It is straightforward to check that $\ksys$ is a pre-Kolyvagin system; since we 
will not make use of this, we omit the proof.  By Definition \ref{pktok} the image of 
$\ksys$ in $\KS(F)$ is $\tilde\ksys$.
\end{proof}

\begin{cor}
\label{prekskscor}
Suppose $\ksys, \ksys' \in \preKS(F)$.  
If $\kappa_1 = \kappa'_1$, then $\kappa_n = \kappa'_n$ for every $n \in \N_p$.
\end{cor}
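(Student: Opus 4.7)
The plan is to deduce the corollary directly from Proposition \ref{preksks} combined with Theorem \ref{rankone}. Given $\ksys, \ksys' \in \preKS(F)$ with $\kappa_1 = \kappa'_1$, I would form the associated Kolyvagin systems $\tilde\ksys$ and $\tilde\ksys'$ from Definition \ref{pktok}.

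The first step is to observe that the transformation $\ksys \mapsto \tilde\ksys$ does not alter the $n=1$ term. Indeed, when $n=1$ the only divisor of $n_+ = 1$ is $d=1$, and by the convention $\D_{1,1}=1$ we get $\tilde\kappa_1 = \kappa_1$ and $\tilde\kappa'_1 = \kappa'_1$. Hence the hypothesis $\kappa_1 = \kappa'_1$ immediately yields $\tilde\kappa_1 = \tilde\kappa'_1$.

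Next, Proposition \ref{preksks} tells us that $\tilde\ksys, \tilde\ksys' \in \KS(F)$, so Theorem \ref{rankone} applies and gives $\tilde\kappa_n = \tilde\kappa'_n$ for every $n \in \N_p$. Finally, since the map $\preKS(F) \to \KS(F)$ sending $\ksys$ to $\tilde\ksys$ is an isomorphism by Proposition \ref{preksks}, in particular it is injective, so $\tilde\ksys = \tilde\ksys'$ forces $\ksys = \ksys'$, i.e.\ $\kappa_n = \kappa'_n$ for all $n \in \N_p$.

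There is essentially no obstacle here, since all the work has been done in Proposition \ref{preksks} and Theorem \ref{rankone}; the only thing that must be checked by hand is the harmless fact that the natural transformation $\T: \ksys \mapsto \tilde\ksys$ fixes the $n=1$ component, which is immediate from Definition \ref{pktok} and the convention $\D_{1,1}=1$. (Alternatively, one can avoid invoking the isomorphism half of Proposition \ref{preksks} and instead use only its injectivity, which, as noted in the proof of Proposition \ref{preksks}, follows by a straightforward induction on the number of prime factors of $n$ from the recursive definition $\tilde\kappa_n = \sum_{d \mid n_+} \kappa_{n/d} \D_{n,d}$.)
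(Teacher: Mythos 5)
Your proof is correct and follows essentially the same route as the paper: apply the transformation of Definition \ref{pktok}, note that it fixes the $n=1$ term, invoke Theorem \ref{rankone} to conclude $\tilde\ksys = \tilde\ksys'$, and then use the injectivity assertion of Proposition \ref{preksks} to descend to $\ksys = \ksys'$. The paper states the equality $\tilde\kappa_1 = \kappa_1$ without comment, whereas you spell out why it holds (only $d=1$ divides $n_+=1$ and $\D_{1,1}=1$), but this is the same argument.
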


\begin{proof}
Let $\tilde\ksys$ and $\tilde\ksys'$ be the images of $\ksys$ and $\ksys'$, respectively, 
under the map of Definition \ref{pktok}.  Then $\tilde\ksys$ and $\tilde\ksys'$ are 
Kolyvagin systems, and $\tilde\kappa_1 = \kappa_1 = \kappa'_1 = \tilde\kappa'_1$.  
Therefore $\tilde\ksys = \tilde\ksys'$ by Theorem \ref{rankone}, so by the injectivity 
assertion of Proposition \ref{preksks} we have $\ksys = \ksys'$, i.e., 
$\kappa_n = \kappa'_n$ for every $n \in \N_p$.
\end{proof}

We will use the following definition and lemma to replace property (iv) in the 
definition of a pre-Kolyvagin system by an equivalent property that will be 
easier to verify.  See Remark \ref{5.9} below.

\begin{defn}
\label{symmdef}
If $n \in \N$, let $\symm(n)$ denote the set of permutations of the primes 
dividing $n_+$, and let $\symm_1(n) \subset \symm(n)$ be the subset
$$
\symm_1(n) := \{\sigma \in \symm(n) : 
    \text{the primes not fixed by $\sigma$ form a single $\sigma$-orbit}\}.
$$
If $\sigma \in \symm(n)$ let $d_\sigma := \prod_{\ell \mid n_+, \sigma(\ell) \ne \ell} \ell$, 
the product of the primes not fixed by $\sigma$, and define
$$
\Pi(\sigma) := \prod_{q \mid d_\sigma}\pi_q(\Frob_{\sigma(q)}-1).
$$
\end{defn}

\begin{lem}
\label{5.8}
Suppose that $A$ is an abelian group, $\ell$ is a prime that splits in $F/\Q$, 
and $x_n \in A \otimes \In$ for every $n \in \N_p$.  
Then the following are equivalent:
\begin{enumerate}
\item
For every $n$ divisible by $\ell$, $\sum_{d \mid n_+} x_{n/d}\,\D_d = 0$.
\item
For every $n$ divisible by $\ell$, 
$\displaystyle x_n = 
   -\sum_{\pile{\sigma\in\symm_1(n)}{\sigma(\ell) \ne \ell}} 
        \sign(\sigma) x_{n/d_\sigma} \Pi(\sigma)$.
\end{enumerate}
\end{lem}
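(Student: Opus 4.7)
The plan is to recast condition (i) as a single weighted sum over permutations of the set $P(n_+) := \{q : q \mid n_+\}$, then prove (ii) $\Rightarrow$ (i) by a cycle-decomposition identity and deduce (i) $\Rightarrow$ (ii) via uniqueness of both recursions.

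First I would expand $\D_d = \det M_d$ by the Leibniz formula. Because $M_d$ has vanishing diagonal, only derangements of $P(d)$ contribute; extending such a derangement to a permutation of $P(n_+)$ by fixing the remaining primes sets up a bijection between the pairs $(d, \sigma)$ with $d \mid n_+$ and $\sigma$ a derangement of $P(d)$ on the one hand, and elements of $\symm(n)$ on the other, under which $d_\sigma = d$ and sign and $\Pi$ match. Consequently
$$\sum_{d \mid n_+} x_{n/d}\, \D_d \;=\; R_n, \qquad R_n := \sum_{\sigma \in \symm(n)} \sign(\sigma)\, x_{n/d_\sigma}\, \Pi(\sigma),$$
so condition (i) becomes $R_n = 0$.

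For (ii) $\Rightarrow$ (i), assume (ii) holds for every $n$ divisible by $\ell$, and write $R_n = A + B$ where $A$ is the sum over $\sigma \in \symm(n)$ fixing $\ell$ and $B$ is the sum over those with $\sigma(\ell) \ne \ell$. For each summand of $A$ one has $\ell \mid n/d_\sigma$, so (ii) yields
$$x_{n/d_\sigma} = -\sum_c \sign(c)\, x_{n/(d_\sigma d_c)}\, \Pi(c),$$
with $c$ ranging over cycles of length $\ge 2$ through $\ell$ on subsets of $P(n_+) \setminus \mathrm{supp}(\sigma)$. The assignment $(\sigma, c) \mapsto \rho := \sigma c$ is a bijection from such pairs onto $\{\rho \in \symm(n) : \rho(\ell) \ne \ell\}$ (its inverse peels off from $\rho$ the cycle through $\ell$), and $\sign(\rho) = \sign(\sigma)\sign(c)$, $d_\rho = d_\sigma d_c$, $\Pi(\rho) = \Pi(\sigma)\Pi(c)$ by multiplicativity under disjoint composition of cycles. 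Substituting therefore gives $A = -B$, hence $R_n = 0$.

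For (i) $\Rightarrow$ (ii) I argue by uniqueness. Both conditions, regarded as recursions extending the ``boundary data'' $\{x_m : \ell \nmid m\}$ to $\{x_n : \ell \mid n\}$, have unique solutions: condition (ii) is in fact a direct formula, since every $\sigma$ appearing satisfies $\ell \mid d_\sigma$; condition (i) rearranges to $x_n = -\sum_{d \mid n_+,\, d>1} x_{n/d}\, \D_d$, which determines $x_n$ by strong induction on $|P(n_+)|$. Since the previous step shows that the (ii)-extension of any boundary data also satisfies (i), it must coincide with the (i)-extension, proving the equivalence. The main technical obstacle lies in the bookkeeping of the cycle-decomposition step, specifically in verifying the bijection $(\sigma, c) \leftrightarrow \rho$ and the multiplicativity of $\sign$, $d_\bullet$, and $\Pi$; once this is in place, the cancellation $A + B = 0$ is immediate.
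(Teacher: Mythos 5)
Your argument is correct and is essentially the paper's proof: the key combinatorial step in both is the bijection obtained by composing a permutation fixing $\ell$ with a cycle through $\ell$ (equivalently, peeling the $\ell$-cycle off a permutation moving $\ell$), and the (i)$\Rightarrow$(ii) direction is handled by the same uniqueness-of-recursion argument. Your reformulation of $\sum_{d\mid n_+} x_{n/d}\,\D_d$ as a single sum over $\symm(n)$ before splitting into the $\sigma(\ell)=\ell$ and $\sigma(\ell)\ne\ell$ parts is a modest streamlining of the paper's bookkeeping, which instead keeps the double sum over pairs $(d,\text{derangement})$ and cancels by matching the coefficient of each $x_{n/\delta}$ against $-\D_\delta$; the underlying identity is the same.
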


\begin{proof}
We show first that (ii) implies (i) (which is the implication we use later in this paper).
Let $\symm'(d) \subset \symm(d)$ denote the derangements, i.e., the permutations 
with no fixed points.
Then we can evaluate the determinant $\D_{d} = \det(M_d)$ as follows.  
Let $m_{q,q'}$ be the $(q,q')$-entry in $M_d$.  Then
\begin{equation}
\label{detp}
\D_d = \sum_{\sigma \in \symm(d)} \sign(\sigma) \prod_{q \mid d} m_{q,\sigma(q)}
   = \sum_{\sigma \in \symm'(d)} \sign(\sigma)\Pi(\sigma),
\end{equation}
where the second equality holds since the diagonal entries of $M_d$ vanish.

Fix an $n$ divisible by $\ell$, and let
$$
S_1 = \sum_{d \mid n_+, \ell \nmid d} x_{n/d}\,\D_d, \quad S_2 
   = \sum_{d \mid n_+, \ell \mid d} x_{n/d}\,\D_d.
$$
Using property (ii) we have
\begin{equation}
\label{cancel}
S_1 = -\sum_{\pile{d \mid n_+}{\ell\nmid d}}
      \sum_{\pile{\sigma\in\symm_1(n/d)}{\sigma(\ell) \ne \ell}} 
      \sign(\sigma) \projI{n/(dd_\sigma)}{(x_{n/(dd_\sigma)})_{\ell}} 
      \Pi(\sigma)\D_d. 
\end{equation}
Fix a divisor $\delta$ of $n_+$ that is divisible by $\ell$.  We will show that the coefficient 
of $x_{n/\delta}$ in $S_1$ in \eqref{cancel} is $-\D_\delta$, which exactly cancels 
the coefficient of $x_{n/\delta}$ in $S_2$.
Using \eqref{detp}, the coefficient of $x_{n/\delta}$ in $S_1$ in \eqref{cancel} is
$$
-\sum_{d \mid (\delta/\ell)}\sum_{\pile{\sigma\in\symm_1(n/d)}{d_\sigma=\delta/d}}
      \biggl(\sign(\sigma)\Pi(\sigma)\sum_{\eta \in \symm'(d)}\sign(\eta)\Pi(\eta)\biggr) 
   = -\sum_{d \mid (\delta/\ell)}\sum_{\pile{\sigma\in\symm_1(n/d)}{d_\sigma=\delta/d}}
      \sum_{\eta \in \symm'(d)}\sign(\sigma\eta)\Pi(\sigma\eta).
$$
For every $\rho \in \symm'(\delta)$ there is a unique triple $(d,\sigma,\eta)$ such that
$$
\text{$d \mid \delta/\ell$, \;$\sigma \in \symm_1(n/d)$, \;$d_\sigma = \delta/d$, 
   \;$\eta \in\symm'(d)$, \;and \;$\rho = \sigma\eta$}.
$$
To see this, 
simply write $\rho$ as a product of disjoint cycles, let $\sigma$ be the cycle containing 
$\ell$, and let $d = \delta/d_\sigma$ and $\eta = \sigma^{-1}\rho$.  Thus 
the coefficient of $x_{n/\delta}$ in $S_1$ in \eqref{cancel} is (using \eqref{detp} again)
$$
-\sum_{\rho \in \symm'(\delta)} \sign(\rho)\Pi(\rho) = -\D_\delta. 
$$
Therefore $\sum_{d \mid n_+} x_{n/d}\,\D_d = S_1 + S_2 = 0$, so (i) holds.

Although we will not need it, here is a simple argument to show that (i) implies (ii).  
Suppose that $X := \{x_n \in A \otimes \In : n \in \N_p\}$ 
satisfies (i).  
If $\ell \mid n$, then (since $\D_1 = 1$) we can use (i) recursively to express $x_n$ 
as a linear combination of $x_d$ with $\ell \nmid d$.  Thus $X$ is uniquely 
determined by the subset $X' := \{x_n \in A \otimes \In : n \in \N_p, \ell \nmid n\}$.  
Clearly $X'$ determines a unique collection $Y := \{y_n \in A \otimes \In : n \in \N_p\}$ 
satisfying (ii), with $y_n = x_n$ if $\ell \nmid n$.  We showed above 
that (ii) implies (i), so $Y$ satisfies (i).  Since (i) and 
$X'$ uniquely determine both $X$ and $Y$, we must have $X = Y$, and so $X$ satisfies (ii).
\end{proof}

\begin{rem}
\label{5.9}
We will apply Lemma \ref{5.8} as follows.  Let $A := (\hat{F}^\times_{\ell,\f})^-$, and let 
$x_n := \projI{n}{(\kappa_n)_{\ell,\f}}$.  Then Lemma \ref{5.8} says that we 
can replace property (iv) in Definition \ref{preksdef} of a pre-Kolyvagin system 
by the equivalent statement:
\begin{enumerate}
\item[(iv)$'$]
if $\ell \mid n_+$, then
$
\displaystyle
\projI{n}{(\kappa_{n})_{\ell,\f}} = 
   -\sum_{\pile{\sigma\in\symm_1(n)}{\sigma(\ell) \ne \ell}} 
        \sign(\sigma) \projI{n/d_\sigma}{(\kappa_{n/d_\sigma})_{\ell}} 
        \Pi(\sigma).
$
\end{enumerate}
\end{rem}

\section{The cyclotomic unit pre-Kolyvagin system}
\label{cupks}

Fix an odd prime $p$.
If $n \in \N$, let $s(n)$ be the number of prime factors of $n/n_+$.  
In this section we will show 
that the collection $\{2^{-s(n)}\tilde\theta'_n : n \in \N_p\}$ is a 
pre-Kolyvagin system.  Recall that 
$$
\N_p^+ := \{n \in \N_p : \text{all $\ell \mid n$ split in $F/\Q$}\}.
$$

\begin{prop}[(Darmon)]
\label{9.4}
If $n \in \N_p$ then
$$
\sum_{d \mid n_+} \tilde\theta'_{n/d} \prod_{\ell \mid d}\pi_{n/d}(\Frob_\ell-1) 
   = 2^{s(n)}\beta_{n_+} \quad \text{in $(\hat{F}^\times)^- \otimes \In$}
$$
where for $n \in \N_p^+$, $\beta_{n} \in (\hat{F}^\times)^- \otimes \In$ is the Kolyvagin 
derivative class denoted $\kappa(n)$ in \cite[\S6]{darmon}, or $\kappa_{n}$ in 
\cite[Appendix]{babykolysys}.  
\end{prop}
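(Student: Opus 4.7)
The plan is to derive this formula by an Euler-system calculation that unwinds the definition of $\theta'_n$ and matches it to the Kolyvagin derivative class $\beta_{n_+}$.  The result is essentially Darmon's computation in \cite[\S6]{darmon} (see also \cite[Appendix]{babykolysys}); the work here consists of translating that computation into the formulation used in this paper.

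First, I would expand $\theta'_n = \sum_{\gamma} \gamma(\alpha_n) \otimes \gamma$ by decomposing $\Gamma_n = \prod_{\ell \mid n} \Gamma_\ell$ and writing each $\gamma$ as a product of powers of chosen generators $\sigma_\ell \in \Gamma_\ell$.  Working modulo $I_n^{r+1}$ and then projecting to $\In$ isolates the ``top-degree'' contributions, namely one factor of $(\sigma_\ell - 1)$ from each split prime $\ell \mid n_+$; the inert factors contribute only through norms because any factor of $(\sigma_q - 1)$ they provide would land in $\J_n$, not $\In$.  The resulting expression for $\projI{n}{\tilde\theta'_n}$ is essentially $D_{n_+}(N_{n/n_+}\alpha_n)$ tensored with the canonical generator $\prod_{\ell \mid n_+}(\sigma_\ell - 1)$ of $\In$, where $D_\ell := \sum_{a=0}^{\ell-2} a\sigma_\ell^a$ is Kolyvagin's derivative operator and $N_q := \sum_{\eta \in \Gamma_q}\eta$.

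Next, I would use the cyclotomic-unit distribution relations to rewrite $N_{n/n_+}\alpha_n$ in terms of $\alpha_{n_+}$.  The inert primes $q \mid n/n_+$ each contribute (via the $\omega_F(q) = -1$ case of the distribution relation) a factor that, after Kolyvagin descent from $F(\bmu_{n_+})^\times$ to $(\hat{F}^\times)^-$, becomes a factor of $2$, accounting for the $2^{s(n)}$ on the right-hand side.  Finally, to identify the remaining expression $D_{n_+}\alpha_{n_+}$ with $\beta_{n_+}$, I would apply the key identity $(\sigma_\ell - 1)D_\ell = \ell - 1 - N_\ell$ together with the split-prime distribution relation $N_\ell \alpha_{n_+} = \alpha_{n_+/\ell}^{\Frob_\ell - 1}$, iterated over the split primes $\ell \mid n_+$.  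This iteration generates precisely the inclusion-exclusion pattern $\sum_{d \mid n_+}\tilde\theta'_{n/d}\prod_{\ell \mid d}\pi_{n/d}(\Frob_\ell - 1)$ appearing on the left-hand side.

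The main obstacle is the detailed bookkeeping: one must verify that higher-order terms in $I_n^{r+1}$, stray contributions in $\J_n$ outside $\In$, and all of the ``correction'' factors produced by the distribution relations organize themselves into exactly the claimed formula, and that the descent from $F(\bmu_{n_+})^\times$ to $\hat{F}^\times$ is compatible with the tensored augmentation-ideal factors.  This bookkeeping is essentially the content of Darmon's calculation in \cite[\S6]{darmon}, from which the present proposition can be extracted directly.
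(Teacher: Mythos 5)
Your proposal is correct and takes essentially the same approach as the paper: the paper's entire proof is simply a citation to Darmon's Proposition 9.4 (with a note correcting a typo there and a remark about the canonical identification of the ambient group with $(\hat{F}^\times)^- \otimes \In$), and your sketch is a plausible outline of that cited Kolyvagin-derivative computation, which you yourself acknowledge is Darmon's.
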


\begin{proof}
This is Proposition 9.4 of \cite{darmon}.\footnote{There is a typo in 
\cite[Proposition 9.4]{darmon}.  The last two $T$'s should be $TQ$, as in 
\cite[Lemma 8.1]{darmon}.}
(Note that $\kappa(n)$ in \cite[\S6]{darmon} and $\kappa_{n}$ in 
\cite[Appendix]{babykolysys} are defined to lie in 
$(\hat{F}^\times)^- \otimes (\Z/\gcd(\ell-1: \ell | n)\Z)$, after fixing generators of every 
$\Gamma_\ell$.  Without fixing such choices, the elements defined in \cite{darmon} 
and \cite{babykolysys} live naturally in $(\hat{F}^\times)^- \otimes \In$.)
\end{proof}

\begin{thm}
\label{thetapreks}
The collection $\{2^{-s(n)}\tilde\theta'_n : n \in \N_p\}$ is a 
pre-Kolyvagin system.
\end{thm}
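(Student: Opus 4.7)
The plan is to set $\kappa_n := 2^{-s(n)}\tilde\theta'_n$ and verify each of the five properties of Definition \ref{preksdef} in turn, using Proposition \ref{9.4} to transfer information from the Kolyvagin derivative classes $\{\beta_{n_+}\}$, which satisfy the Kolyvagin system axioms of Definition \ref{ksdef} by \cite{darmon,babykolysys}. Since $s(n/d) = s(n)$ whenever $d \mid n_+$, this normalization fits Proposition \ref{9.4}, which rewrites as
\[
\sum_{d \mid n_+}\kappa_{n/d}\prod_{\ell\mid d}\pi_{n/d}(\Frob_\ell - 1) = \beta_{n_+} \quad \text{in $(\hat{F}^\times)^- \otimes \In$.}
\]
Property (i) is immediate: $\alpha_n \in \E_n = \O_F[1/n]^\times$ has image in $\hat{\O}_\ell^\times = \hat{F}_{\ell,\f}^\times$ for every $\ell \nmid n$, and this passes to $\tilde\theta'_n$. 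Property (v), the inert/ramified analog, follows by induction on $r(n)$ from comparing Proposition \ref{9.4} at $n$ and at $n/\ell$ for $\ell \mid n/n_+$ (using $(n/\ell)_+ = n_+$), together with the standard cyclotomic-unit norm relation $N_{F(\bmu_n)/F(\bmu_{n/\ell})}\alpha_n = \alpha_{n/\ell}^{1-\omega_F(\ell)\Frob_\ell^{-1}}$; the factor of two forced by $s(n/\ell) = s(n) - 1$ is exactly cancelled by the normalization $2^{-s(n)}$.

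For property (ii) I would apply $1 \otimes \pi_{n/\ell}$ to the displayed identity. Since $\pi_{n/\ell}$ annihilates $I_\ell/I_\ell^2$, only terms with $\ell \mid d$ survive; reindexing $d = \ell e$ and matching with Proposition \ref{9.4} at level $n/\ell$ (whose right-hand side is again $\beta_{n_+/\ell}$) cancels the sum on the right and isolates $(1 \otimes \pi_{n/\ell})\kappa_n = \kappa_{n/\ell}\,\pi_{n/\ell}(1-\Frob_\ell)$. For property (iii) I would take the transverse projection at $\ell$ of the displayed identity: every term with $\ell \mid d$ vanishes transversely (since $(\kappa_{n/d})_\ell$ is unramified at $\ell$ by property (i), because $\ell \nmid n/d$), so the transverse part of the left-hand side reduces to a sum over $d \mid n_+/\ell$; the Kolyvagin axiom $(\beta_{n_+})_{\ell,\tr} = (\phifs_\ell \otimes 1)((\beta_{n_+/\ell})_\ell)$, combined with Proposition \ref{9.4} at level $n/\ell$ and the already-established property (ii), then yields the identity term by term by induction on $r(n)$.

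Property (iv) is the main obstacle. I would work with the equivalent form (iv)$'$ of Remark \ref{5.9} and proceed by induction on $r(n)$: the permutation sum over $\symm_1(n)$ on the right of (iv)$'$ should match precisely the off-diagonal correction that appears when one inverts Proposition \ref{9.4} and invokes the Kolyvagin-axiom vanishing $(\beta_{n_+})_{\ell,\f} = 0$. The difficulty is that Proposition \ref{9.4} features only the single diagonal monomial $\prod_{\ell'\mid d}\pi_{n/d}(\Frob_{\ell'}-1)$, whereas (iv)$'$ ranges over all cycles in $\symm_1(n)$ passing through $\ell$; matching these structures requires a careful cycle-decomposition argument, essentially the combinatorics of the isomorphism $\preKS(F) \isom \KS(F)$ of Proposition \ref{preksks} run in reverse. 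It is this combinatorial mismatch that distinguishes pre-Kolyvagin systems from genuine Kolyvagin systems, and surmounting it is where the real work of the theorem lies.
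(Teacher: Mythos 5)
Your proposal contains a central misunderstanding that undermines the proofs of (iii) and especially (iv): the classes $\beta_{n_+}$ appearing in Proposition~\ref{9.4} are Kolyvagin \emph{derivative} classes (as in Darmon and \cite[Appendix]{babykolysys}), not Kolyvagin systems satisfying Definition~\ref{ksdef}. In particular, the vanishing $(\beta_{n_+})_{\ell,\f}=0$ that you invoke for property (iv) is \emph{false} for derivative classes; their finite part at $\ell$ is precisely the permutation sum that appears in (iv)$'$, and this identity is the content of \cite[Theorem A.4]{kolysys} and \cite[Proposition A.2]{babykolysys}. Once you realize this, the ``main obstacle'' you identify evaporates: the paper's entire argument for (iii), (iv), (v) is to project Proposition~\ref{9.4} into $(\hat F^\times)^-\otimes\In$, observe that every summand with $d>1$ lands in $\Jn$ and hence dies, conclude that $\projI{n}{2^{-s(n)}\tilde\theta'_n}=\beta_{n_+}$, and then cite the known derivative-class relations for $\beta_{n_+}$. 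There is no cycle-decomposition combinatorics to do in this theorem; that combinatorics lives entirely inside Proposition~\ref{preksks} and the cited appendix results, not here.

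Your argument for property (ii) is also defective as stated. You claim that after applying $1\otimes\pi_{n/\ell}$ to the identity of Proposition~\ref{9.4}, ``only terms with $\ell\mid d$ survive.'' But the term $d=1$ contributes $\pi_{n/\ell}(\kappa_n)$, which is exactly the quantity you are trying to compute and does not vanish; and the terms with $\ell\mid d$ (e.g., $d=\ell$) are \emph{fixed} by $\pi_{n/\ell}$, since neither $\kappa_{n/d}$ nor $\prod_{\ell'\mid d}\pi_{n/d}(\Frob_{\ell'}-1)$ involves $\Gamma_\ell$ when $\ell\mid d$. So no clean dichotomy of vanishing holds, and trying to extract (ii) from Proposition~\ref{9.4} by this route leads to a tangle. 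The paper instead proves (ii) in one line directly from the definition of $\theta'_n$ and the norm relation $\bN_{F(\bmu_n)/F(\bmu_{n/\ell})}\alpha_n = \alpha_{n/\ell}/\alpha_{n/\ell}^{\Frob_\ell^{-1}}$ valid for $\ell\mid n_+$ (no factor of $2$ arises because $s(n)=s(n/\ell)$). Your treatment of (i) is fine, and your plan for (v) via norm relations could be made to work, but it is far more labored than the paper's observation that $\projI{n}{\kappa_n}=\beta_{n_+}$ depends only on $n_+$, which makes (v) immediate.
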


\begin{proof}
We need to check the five properties of Definition \ref{preksdef}.  
For $n \in \N_p^+$, let $\beta_{n}$ be as in Proposition \ref{9.4}.

Since $\beta_{n_+} \in (\hat{F}^\times)^- \otimes \In$ for every $n$, it follows 
easily by induction from Proposition \ref{9.4} that 
$\tilde\theta'_n \in (\hat{F}^\times)^- \otimes I_n^r/I_n^{r+1}$, 
where $r$ is the number of prime factors of $n_+$.
This is property (i) of Definition \ref{preksdef}.

Suppose $\ell \mid n_+$.  A standard property of cyclotomic units shows that
$$
\bN_{F(\bmu_n)/F(\bmu_{n/\ell})}\alpha_n = \alpha_{n/\ell}/\alpha_{n/\ell}^{\Frob_\ell^{-1}}.
$$
It follows from the definition of $\theta'_n$ that
\begin{multline*}
(1 \otimes \pi_{n/\ell})(\theta'_n) 
   = \sum_{\gamma \in \Gamma_n} \gamma(\alpha_n) \otimes \pi_{n/\ell}(\gamma)
   = \sum_{\gamma \in \Gamma_{n/\ell}} \gamma(\bN_{F(\bmu_n)/F(\bmu_{n/\ell})}\alpha_n) \otimes \gamma\\
   = \sum_{\gamma \in \Gamma_{n/\ell}} 
      \gamma\bigl(\alpha_{n/\ell}/\alpha_{n/\ell}^{\Frob_\ell^{-1}}\bigr) \otimes \gamma
   = \sum_{\gamma \in \Gamma_{n/\ell}} \gamma(\alpha_{n/\ell}) \otimes \gamma\,\pi_{n/\ell}(1-\Frob_\ell)
   = \theta'_{n/\ell}\,\pi_{n/\ell}(1-\Frob_\ell).
\end{multline*}
Since $\ell \mid n_+$ we have $s(n) = s(n/\ell)$, so 
this verifies property (ii) of Definition \ref{preksdef}.

Projecting each of the summands in Proposition \ref{9.4} into $(\hat{F}^\times)^- \otimes \In$, 
one sees that all terms with $d > 1$ vanish, yielding 
$$
\projI{n}{2^{-s(n)}\tilde\theta'_{n}} = \projI{n}{\beta_{n_+}} = \beta_{n_+}.
$$
Properties (iii), (iv), and (v) of Definition \ref{preksdef} follow from the 
corresponding properties of the $\beta_{n_+}$.
See \cite[Proposition A.2]{kolysys} or \cite[Theorem 4.5.4]{eulersys} 
for (iii), and \cite[Theorem A.4]{kolysys}
or \cite[Proposition A.2]{babykolysys} for property (iv)$'$ of Remark \ref{5.9}.  
Property (v) is immediate, since $\beta_{n_+}$ depends only on $n_+$.
\end{proof}

\section{The regulator pre-Kolyvagin system}
\label{rpks}

In this section we study relations among the regulator 
elements $R_n$, to show that the collection 
$\{h_nR_n : n \in \N_p\}$ is a pre-Kolyvagin system.

\begin{lem}
\label{cnr}
Suppose $n \in \N$, $\ell \mid n_+$, and   
$\{\lambda_0-\lambda_0^\tau,\ldots,\lambda_r-\lambda_r^\tau\}$ 
is a standard basis of $X_n^-$ with $\lambda_r\lambda_r^\tau = \ell$.
Then $\{\lambda_0-\lambda_0^\tau,\ldots,\lambda_{r-1}-\lambda_{r-1}^\tau\}$ 
is a standard basis of $X_{n/\ell}^-$, and we can choose an oriented basis 
$\{\epsilon_0,\ldots,\epsilon_r\}$ of $(1-\tau)\E_n$ such that 
$\{\epsilon_0,\ldots,\epsilon_{r-1}\}$ is an oriented basis of $(1-\tau)\E_{n/\ell}$.

With any such bases, $\ord_{\lambda_r}(\epsilon_r) = -h_{n/\ell}/h_n$ and 
$$
\grx{\lambda_r}{n/\ell}{\epsilon_r} = \frac{h_{n/\ell}}{h_n}\,\pi_{n/\ell}(1-\Frob_\ell) \in I_{n/\ell}/I_{n/\ell}^2.
$$
\end{lem}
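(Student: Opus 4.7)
The plan is to first set up compatible bases by identifying $(1-\tau)\E_n/(1-\tau)\E_{n/\ell}$ as an infinite cyclic quotient, then compute the image of $\ord_{\lambda_r}$ on this quotient using the Picard-group relation between $\O_F[1/(n/\ell)]$ and $\O_F[1/n]$, and finally convert the resulting order into a local Artin symbol. Removing $\lambda_r$ from the standard basis of $X_n^-$ leaves primes of $F$ above the archimedean place and above $n_+/\ell = (n/\ell)_+$, which is by definition a standard basis of $X_{n/\ell}^-$. For the units, if $\epsilon = \eta^{1-\tau}\in(1-\tau)\E_n$ satisfies $\ord_{\lambda_r}(\epsilon)=0$, then $\ord_{\lambda_r}(\eta)=\ord_{\lambda_r^\tau}(\eta)=:m$, and $\eta\ell^{-m}\in\E_{n/\ell}$ has the same image under $1-\tau$ (since $\ell\in\Q$ is $\tau$-fixed). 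Thus $\ker(\ord_{\lambda_r})=(1-\tau)\E_{n/\ell}$, and $(1-\tau)\E_n/(1-\tau)\E_{n/\ell}$ embeds into $\Z$ and is infinite cyclic, so any basis of $(1-\tau)\E_{n/\ell}$ extends by a preimage $\epsilon_r$ of a generator.

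To compute $\ord_{\lambda_r}(\epsilon_r)$, note that $m\in\Z$ lies in the image of $\ord_{\lambda_r}$ iff $\lambda_r^{a}(\lambda_r^\tau)^{b}$ is principal in $\O_F[1/(n/\ell)]$ for some $a,b$ with $a-b=m$. Let $C=\langle[\lambda_r],[\lambda_r^\tau]\rangle\subseteq\Pic(\O_F[1/(n/\ell)])$; the exact sequence $\Z^2\to\Pic(\O_F[1/(n/\ell)])\to\Pic(\O_F[1/n])\to 0$ identifies $C$ as the kernel on the right, so $|C|=h_{n/\ell}/h_n$. Writing $K\subseteq\Z^2$ for the kernel of $\Z^2\onto C$, the relation $(\ell)=\lambda_r\lambda_r^\tau$ in $\O_F$ shows that the diagonal $\Delta\subseteq K$. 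Since $(a,b)\mapsto a-b$ identifies $\Z^2/\Delta$ with $\Z$, the image of $K$ is the subgroup of $\Z$ of index $[\Z^2:K]=|C|=h_{n/\ell}/h_n$, namely $(h_{n/\ell}/h_n)\Z$. To pin down the sign, expand the regulator matrix of $(1-\tau)\E_n$ along its last row; since $\epsilon_0,\dots,\epsilon_{r-1}$ are units at $\lambda_r$, only the entry $\log|\epsilon_r|_{\lambda_r}=-\ord_{\lambda_r}(\epsilon_r)\log\ell$ survives, giving $\mathrm{Reg}_n=-(\log\ell)\ord_{\lambda_r}(\epsilon_r)\mathrm{Reg}_{n/\ell}$. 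Positivity of both regulators forces $\ord_{\lambda_r}(\epsilon_r)=-h_{n/\ell}/h_n$, which can be arranged by inverting $\epsilon_r$ if necessary.

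For the second formula, since $\ell\nmid n/\ell$ and $\ell$ splits in $F$, the extension $F_{\lambda_r}(\bmu_{n/\ell})/F_{\lambda_r}$ is unramified with Frobenius equal to (the image in $\Gamma_{n/\ell}$ of) $\Frob_\ell$. Hence the local Artin symbol of $\epsilon_r$ is $\Frob_\ell^{\ord_{\lambda_r}(\epsilon_r)}$, and the congruence $\sigma^k\equiv 1+k(\sigma-1)\pmod{(\sigma-1)^2}$ applied with $k=-h_{n/\ell}/h_n$ and $\sigma=\Frob_\ell$ yields $\grx{\lambda_r}{n/\ell}{\epsilon_r}\equiv (h_{n/\ell}/h_n)\pi_{n/\ell}(1-\Frob_\ell)\pmod{I_{n/\ell}^2}$.

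The main obstacle is the Picard-group index computation in the second paragraph: one must combine the order of $C$ with the fact that the diagonal lies in $K$ (via $\lambda_r\lambda_r^\tau=(\ell)$) to conclude that the image of $\ord_{\lambda_r}$ is exactly $(h_{n/\ell}/h_n)\Z$ rather than some smaller or larger subgroup. The orientation argument and the unramified Artin-symbol calculation are then essentially bookkeeping.
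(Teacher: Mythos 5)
Your proposal is correct and follows essentially the same route as the paper: regulator positivity forces the sign, a $\Pic$-group index computation gives the magnitude $h_{n/\ell}/h_n$, and the unramified-Frobenius description of the local Artin symbol gives the final congruence. Where the paper compresses the index computation into a single four-term exact sequence $(1-\tau)\E_n \to \Z \to \Pic(\O_F[1/(n/\ell)]) \to \Pic(\O_F[1/n]) \to 0$, you unpack it via the surjection $\Z^2 \onto C$, the diagonal lying in its kernel because $(\ell) = \lambda_r\lambda_r^\tau$ is principal, and the identification $\Z^2/\Delta \cong \Z$; this is a more explicit verification of the same exactness and yields the same index. The rest (constructing the compatible oriented bases, and the computation of $\grx{\lambda_r}{n/\ell}{\epsilon_r}$) matches the paper.
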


\begin{proof}
Everything except the final sentence is clear.  
Comparing the determinants of the logarithmic embeddings
$$
(1-\tau)\E_{n/\ell} \map{\xi_{n/\ell}} X_{n/\ell}^-, \qquad (1-\tau)\E_n \map{\xi_n} X_n^-
$$
with respect to our given bases, we see that 
$$
\det(\xi_n) = \log|\epsilon_r|_{\lambda_r}\det(\xi_{n/\ell})
$$
because $\log|\epsilon_i|_{\lambda_r} = 0$ for $0 \le i < r$.  Since our bases 
are oriented, both determinants are positive.  Hence 
$$
|\epsilon_r|_{\lambda_r} = \ell^{-\ord_{\lambda_r}(\epsilon_r)} > 1
$$
so $\ord_{\lambda_r}(\epsilon_r) < 0$.

The exact sequence
$$
(1-\tau)\E_n \map{\ord_{\lambda_r}} \Z \map{\cdot\lambda_r} \Pic(\O_F[\ell/n]) 
   \too \Pic(\O_F[1/n]) \too 0
$$
shows that 
$$
[\Z : \ord_{\lambda_r}(\epsilon_r)\Z] = h_{n/\ell}/h_n,
$$
so $\ord_{\lambda_r}(\epsilon_r) = - h_{n/\ell}/h_n$ as claimed.
Since $F(\bmu_{n/\ell})/F$ is unramified at $\lambda_r$, 
$$
\grx{\lambda_r}{n/\ell}{\epsilon_r}
   = (\Frob_{\ell}^{\ord_{\lambda_r}(\epsilon_r)}) - 1 
   = \ord_{\lambda_r}(\epsilon_r) (\Frob_{\ell} - 1)
   = -h_{n/\ell}/h_n (\Frob_{\ell} - 1)
$$
in $I_{n/\ell}/I_{n/\ell}^2$.
\end{proof}

\begin{prop}
\label{regreg1lem}
Suppose $n \in \N$, $\ell \mid n_+$, and $r = r(n)$.  Then
$$
(1 \otimes \pi_{n/\ell})(h_n R_n) = h_{n/\ell} R_{n/\ell}\,\pi_{n/\ell}(1-\Frob_{\ell}) 
   \in F^\times \otimes I_n^r/I_n^{r+1}.
$$
\end{prop}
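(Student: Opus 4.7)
The plan is to choose bases compatibly with Lemma \ref{cnr}, compute $\pi_{n/\ell}$ entrywise on the matrix defining $R_n$, observe that the last row collapses to a single nonzero entry, and expand by that row to reduce to $R_{n/\ell}$.

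More precisely, using Lemma \ref{cnr}, I would fix a standard basis of $X_n^-$ with $\lambda_r\lambda_r^\tau = \ell$ and an oriented basis $\{\epsilon_0,\ldots,\epsilon_r\}$ of $(1-\tau)\E_n$ whose first $r$ elements give an oriented basis of $(1-\tau)\E_{n/\ell}$. Applying $\pi_{n/\ell}$ to the entries $\grx{\lambda_i}{n}{\epsilon_j}$ then amounts to killing the $I_\ell/I_\ell^2$-component: using the decomposition
$$\grx{\lambda_i}{n}{\epsilon_j} = \grx{\lambda_i}{n/\ell}{\epsilon_j} + \grx{\lambda_i}{\ell}{\epsilon_j}$$
from the definition, $\pi_{n/\ell}(\grx{\lambda_i}{n}{\epsilon_j}) = \grx{\lambda_i}{n/\ell}{\epsilon_j}$. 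For $i < r$ and $j < r$, this is exactly the $(i,j)$-entry of the regulator matrix defining $R_{n/\ell}$, so the upper-left $r\times r$ block of $\pi_{n/\ell}$ of the matrix for $R_n$ is essentially the matrix for $R_{n/\ell}$.

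The key computation is the bottom row. For $j < r$, $\epsilon_j \in (1-\tau)\E_{n/\ell}$ is a unit at $\lambda_r$, so $\grx{\lambda_r}{n/\ell}{\epsilon_j} = 0$ because the local Artin symbol of a unit in the unramified extension $F_{\lambda_r}(\bmu_{n/\ell})/F_{\lambda_r}$ is trivial. The remaining entry is $\grx{\lambda_r}{n/\ell}{\epsilon_r}$, which by Lemma \ref{cnr} equals $(h_{n/\ell}/h_n)\,\pi_{n/\ell}(1-\Frob_\ell)$. Thus in the expansion \eqref{minors} of $R_n$ by the top row, the cofactor $\det(A_{1j})$ for $j = r$ has an all-zero last row after applying $\pi_{n/\ell}$ and hence vanishes, while for each $j < r$ the last row of $\pi_{n/\ell}(A_{1j})$ has the single nonzero entry $(h_{n/\ell}/h_n)\,\pi_{n/\ell}(1-\Frob_\ell)$ in its last column. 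Expanding each such $\det(\pi_{n/\ell}(A_{1j}))$ along this last row yields
$$\det(\pi_{n/\ell}(A_{1j})) = \frac{h_{n/\ell}}{h_n}\,\pi_{n/\ell}(1-\Frob_\ell)\cdot \det(B_j),$$
where $B_j$ is precisely the $(1,j)$-minor of the matrix defining $R_{n/\ell}$. Re-assembling gives $\pi_{n/\ell}(R_n) = (h_{n/\ell}/h_n)\,R_{n/\ell}\,\pi_{n/\ell}(1-\Frob_\ell)$, and multiplying through by $h_n$ proves the proposition.

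The only delicate step is the last-row computation, which requires knowing both that $\lambda_r$ is unramified in $F(\bmu_{n/\ell})/F$ (handling the off-diagonal zeros) and the precise value of $\grx{\lambda_r}{n/\ell}{\epsilon_r}$ in terms of $h_{n/\ell}/h_n$; both are supplied by Lemma \ref{cnr}. Everything else is bookkeeping with the multilinear expansion \eqref{minors}, and the independence of $R_n$ from the choice of bases ensures that working with the special bases afforded by Lemma \ref{cnr} suffices.
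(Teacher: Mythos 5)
Your proposal is correct and follows essentially the same route as the paper's own proof: fix compatible bases via Lemma \ref{cnr}, apply $\pi_{n/\ell}$ entrywise, observe that the bottom row collapses to a single entry $\grx{\lambda_r}{n/\ell}{\epsilon_r} = (h_{n/\ell}/h_n)\pi_{n/\ell}(1-\Frob_\ell)$, and factor out that entry to recover $R_{n/\ell}$. The paper phrases the final reduction by directly exhibiting the block-triangular matrix rather than expanding each cofactor $\det(\pi_{n/\ell}(A_{1j}))$ along its last row, but the two presentations are arithmetically the same.
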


\begin{proof}
To compute $R_n$, fix bases for $X_n^-$ and $\E_n^-$ as in Lemma \ref{cnr}.
By definition
$$
R_n := \left|
\begin{array}{ccccccc}
\epsilon_0 & \epsilon_1 & \cdots & \epsilon_r \\
\grx{\lambda_1}{n}{\epsilon_0} & \grx{\lambda_1}{n}{\epsilon_1} & \cdots & \grx{\lambda_1}{n}{\epsilon_r} \\
\vdots & \vdots && \vdots\\
\grx{\lambda_r}{n}{\epsilon_0} & \grx{\lambda_r}{n}{\epsilon_1} & \cdots & \grx{\lambda_r}{n}{\epsilon_r}
\end{array}
\right|,
$$
and then $(1 \otimes \pi_{n/\ell})(R_n)$ is the determinant of the matrix 
obtained by applying $\pi_{n/\ell}$ to rows $2$ through $r+1$ of this matrix.
For $i < r$, $\epsilon_i$ is a unit at $\lambda_r$, so the local Artin symbol 
$[\epsilon_i,F(\bmu_n)_{\lambda_r}/F_{\lambda_r}]$ lies in the inertia group $\Gamma_\ell$.
Hence $\pi_{n/\ell}(\grx{\lambda_r}{n}{\epsilon_i}) = \grx{\lambda_r}{n/\ell}{\epsilon_i} = 0$ 
for $i < r$, and so
$$
(1 \otimes\pi_{n/\ell})(R_n) = \left|
\begin{array}{ccccccc}
\epsilon_0 & \cdots & \epsilon_{r-1} & \epsilon_r \\
\grx{\lambda_1}{n/\ell}{\epsilon_0} & \cdots & \grx{\lambda_1}{n/\ell}{\epsilon_{r-1}} 
   &\grx{\lambda_1}{n/\ell}{\epsilon_r} \\
\vdots && \vdots & \vdots\\
\grx{\lambda_{r-1}}{n/\ell}{\epsilon_0} & \cdots & \grx{\lambda_{r-1}}{n/\ell}{\epsilon_{r-1}} 
   &\grx{\lambda_{r-1}}{n/\ell}{\epsilon_r} \\
0 & \cdots & 0 & \grx{\lambda_{r}}{n/\ell}{\epsilon_r}
\end{array}
\right|.
$$
The upper left $r \times r$ determinant is the one used to define $R_{n/\ell}$, so 
$$
(1 \otimes\pi_{n/\ell})(R_n) = R_{n/\ell} \grx{\lambda_r}{n/\ell}{\epsilon_r}
   = \frac{h_{n/\ell}}{h_n } R_{n/\ell} \, \pi_{n/\ell}(1-\Frob_{\ell})
$$
by Lemma \ref{cnr}.
\end{proof}

Fix an odd prime $p$ as in \S\S\ref{kssect} and \ref{prekssect}, 
and keep the rest of the notation of those sections as well.

\begin{lem}
\label{finlem}
If $n \in \N_p$, $\ell$ is a prime not dividing $n$, and $r = r(n)$, then 
$$
(R_n)_{\ell} \in (\hat{F}_{\ell,\f}^\times)^- \otimes I_n^r/I_n^{r+1}.
$$
\end{lem}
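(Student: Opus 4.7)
The plan is to unwind the definition of $R_n$ and observe that the $\epsilon_j$'s appearing in it are actually units at any prime not dividing $n$, which gives the result directly.

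First I would recall that by the expansion by minors formula \eqref{minors},
$$
R_n = \sum_{j=0}^r (-1)^{j}\epsilon_j \otimes \det(A_{1j})
   \quad \in (1-\tau)\E_n \otimes I_n^r/I_n^{r+1},
$$
where $\{\epsilon_0,\ldots,\epsilon_r\}$ is an oriented basis of $(1-\tau)\E_n$ and $\det(A_{1j}) \in I_n^r/I_n^{r+1}$. Since it suffices to check the containment on each summand, the problem reduces to showing $(\epsilon_j)_\ell \in (\hat{F}_{\ell,\f}^\times)^-$ for all $j$.

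The key point is Lemma \ref{freer+1}(i), which gives $(1-\tau)\E_n = (1-\tau)\E_{n_+}$. Hence each $\epsilon_j$ is (the image of) a unit at every prime of $F$ not dividing $n_+$. Since the hypothesis is that $\ell \nmid n$ (in particular $\ell \nmid n_+$), the element $\epsilon_j$ lies in $\O_\lambda^\times$ for every prime $\lambda$ of $F$ above $\ell$. Passing to the $p$-adic completion gives
$$
(\epsilon_j)_\ell \in \hat{\O}_\ell^\times = \hat{F}_{\ell,\f}^\times.
$$
Moreover $\epsilon_j \in (1-\tau)\E_n \subset \E_n^-$ by construction, so $(\epsilon_j)_\ell \in (\hat{F}_{\ell,\f}^\times)^-$.

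Combining these two facts with the expansion above yields
$$
(R_n)_\ell = \sum_{j=0}^r (-1)^j (\epsilon_j)_\ell \otimes \det(A_{1j})
   \in (\hat{F}_{\ell,\f}^\times)^- \otimes I_n^r/I_n^{r+1},
$$
which is the claim. There is no real obstacle here: the entire argument is an immediate consequence of the fact that $(1-\tau)\E_n$ consists of $n_+$-units and that ``finite'' means exactly ``local unit'' in this setting.
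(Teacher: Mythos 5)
Your proof is correct and is essentially the same argument as the paper's: $R_n$ lies in $\E_n^- \otimes I_n^r/I_n^{r+1}$, and any element of $\E_n = \O_F[1/n]^\times$ is a unit at a prime $\ell \nmid n$, hence maps into $\hat{\O}_\ell^\times = \hat{F}_{\ell,\f}^\times$. (The detour through Lemma \ref{freer+1}(i) and $n_+$ is harmless but unnecessary, since $\ell \nmid n$ already implies the $\epsilon_j$ are units at $\ell$.)
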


\begin{proof}
Since $\ell \nmid n$, if $\epsilon \in \E_n^-$ then 
$\epsilon_\ell \in (\hat{\O}_\ell^\times)^- 
   = (\hat{F}_{\ell,\f}^\times)^- \subset (\hat{F}_{\ell}^\times)^-$.
Now the lemma is clear, since $R_n \in \E_n^- \otimes I_n^r/I_n^{r+1}$.
\end{proof}

\begin{prop}
\label{fsprop}
Suppose $n \in \N_p$ and $\ell \mid n_+$.
Then 
$$
\projI{n}{h_n(R_n)_{\ell,\tr}} = (\phifs_\ell \otimes 1)(\projI{n/\ell}{h_{n/\ell}(R_{n/\ell})_\ell}).
$$
\end{prop}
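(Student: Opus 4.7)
The plan is to evaluate both sides by a double cofactor expansion: first on $R_n$ to extract its transverse part at $\ell$, and then on the resulting $r \times r$ minor to relate it to the minors appearing in $R_{n/\ell}$.

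\emph{Setup and first expansion.} Using Lemma~\ref{cnr}, fix a standard basis $\{\lambda_0-\lambda_0^\tau,\ldots,\lambda_r-\lambda_r^\tau\}$ of $X_n^-$ with $\lambda_r \mid \ell$, and an oriented basis $\{\epsilon_0,\ldots,\epsilon_r\}$ of $(1-\tau)\E_n$ whose first $r$ elements form an oriented basis of $(1-\tau)\E_{n/\ell}$. In the expansion $R_n = \sum_{j=0}^r(-1)^j \epsilon_j \otimes \det A_{1j}$, for $j<r$ the element $\epsilon_j \in (1-\tau)\E_{n/\ell}$ is a unit at every prime above $\ell$, so $(\epsilon_j)_{\ell,\tr}=0$. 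Using $\ord_{\lambda_r}(\epsilon_r)=-h_{n/\ell}/h_n$ (Lemma~\ref{cnr}) together with $\tau\epsilon_r=\epsilon_r^{-1}$, one finds $(\epsilon_r)_{\ell,\tr}=-(h_{n/\ell}/h_n)(\ell,\ell^{-1})$ in $(\hat F^\times_{\ell,\tr})^-$, hence
\[
h_n (R_n)_{\ell,\tr} \;=\; (-1)^{r+1}\,h_{n/\ell}\,(\ell,\ell^{-1}) \otimes \det A_{1r}.
\]

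\emph{Second expansion and projection to $\I_n$.} Since $\epsilon_j$ is a unit at $\lambda_r$ for $j<r$, the local Artin symbol lies in the inertia at $\ell$, so $\grx{\lambda_r}{n}{\epsilon_j} = \grx{\lambda_r}{\ell}{\epsilon_j} \in I_\ell/I_\ell^2$. Expanding $\det A_{1r}$ along its bottom row (the $\lambda_r$-row) and then invoking the Lemma at the end of \S\ref{augq} with $d = n/\ell$, together with the identity $\pi_{n/\ell}(\grx{\lambda_i}{n}{x}) = \grx{\lambda_i}{n/\ell}{x}$, gives
\[
\projI{n}{\det A_{1r}} \;=\; \sum_{j=0}^{r-1}(-1)^{r+j+1}\,\grx{\lambda_r}{\ell}{\epsilon_j}\,\projI{n/\ell}{\det C^{(j)}},
\]
where $C^{(j)}$ denotes the $(r-1)\times(r-1)$ minor of the matrix defining $R_{n/\ell}$ obtained by deleting its top row and the column labeled by $\epsilon_j$.

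\emph{Matching with $\phifs_\ell$.} Expanding $R_{n/\ell}$ along its top row gives $R_{n/\ell} = \sum_{j=0}^{r-1}(-1)^j\epsilon_j\otimes\det C^{(j)}$. Since each $\epsilon_j$ ($j<r$) is a unit at $\ell$, Definition~\ref{phifsdef} yields $\phifs_\ell((\epsilon_j)_\ell) = (\ell,\ell^{-1})\otimes\grx{\lambda_r}{\ell}{\epsilon_j}$, so
\[
(\phifs_\ell\otimes 1)\bigl(\projI{n/\ell}{h_{n/\ell}(R_{n/\ell})_\ell}\bigr) \;=\; h_{n/\ell}(\ell,\ell^{-1})\otimes\sum_{j=0}^{r-1}(-1)^j\grx{\lambda_r}{\ell}{\epsilon_j}\,\projI{n/\ell}{\det C^{(j)}}.
\]
Combining the sign $(-1)^{r+1}$ from the first expansion with the sign $(-1)^{r+j+1}$ from the second produces exactly $(-1)^j$, matching the right-hand side and completing the proof. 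The argument is essentially bookkeeping of two nested cofactor expansions; the main subtlety is tracking how $\grx{\lambda}{n}{x}$ splits over the primes dividing $n$ and verifying that only the ``diagonal'' piece survives the projection to $\I_n$.
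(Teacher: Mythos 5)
Your proof is correct, and it takes essentially the same approach as the paper's: fix a basis adapted to $\ell$ as in Lemma \ref{cnr}, use the fact that only the $\epsilon_r$ column contributes to the $\ell$-transverse part, identify the factor $-h_{n/\ell}/h_n$ from $\ord_{\lambda_r}(\epsilon_r)$, and observe that after projecting to $\I_n$ the entries $\grx{\lambda_i}{n}{\cdot}$ in the remaining minor may be replaced by $\grx{\lambda_i}{n/\ell}{\cdot}$, recovering $(\phifs_\ell\otimes 1)$ applied to the minor expansion of $R_{n/\ell}$. The only cosmetic difference is that you carry out two explicit cofactor expansions and invoke the lemma at the end of \S\ref{augq}, whereas the paper moves the $\epsilon_r$ column and $\lambda_r$ row to the front, absorbs them into the $\phifs_\ell$ row, and kills the unwanted $\grx{\lambda_i}{\ell}{\cdot}$ terms by noting they contribute in $I_\ell^2$; these are the same computation organized slightly differently.
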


\begin{proof}
Note that 
$(\phifs_\ell \otimes 1)(\projI{n/\ell}{h_{n/\ell}(R_{n/\ell})_\ell}) 
   \in (\hat{F}_{\ell,\tr}^\times)^- \otimes \In$ 
is well-defined, since Lemma \ref{finlem} shows that 
$(R_{n/\ell})_{\ell} \in (\hat{F}_{\ell,\f}^\times)^- \otimes I_{n/\ell}^{r-1}/I_{n/\ell}^{r}$. 

As in the proof of Proposition \ref{regreg1lem}, fix a basis
$\{\lambda_0-\lambda_0^\tau,\ldots,\lambda_r-\lambda_r^\tau\}$ of $X_n^-$ 
with $\ell = \lambda_r\lambda_r^\tau$, and 
an oriented basis $\{\epsilon_0,\ldots,\epsilon_r\}$ of $(1-\tau)\E_n$ as in Lemma \ref{cnr}.  Then
$$
(R_n)_{\ell,\tr} = \left|
\begin{array}{ccccccc}
(\epsilon_0)_{\ell,\tr} & \cdots & (\epsilon_{r-1})_{\ell,\tr} & (\epsilon_r)_{\ell,\tr} \\
\grx{\lambda_1}{n}{\epsilon_0} & \grx{\lambda_1}{n}{\epsilon_1} & \cdots & \grx{\lambda_1}{n}{\epsilon_r} \\
\vdots & \vdots && \vdots\\
\grx{\lambda_r}{n}{\epsilon_0} & \grx{\lambda_r}{n}{\epsilon_1} & \cdots & \grx{\lambda_r}{n}{\epsilon_r}
\end{array}
\right| 
   = \ord_{\lambda_r}(\epsilon_r)
\left|
\begin{array}{ccccccc}
1 & \cdots & 1 & (\ell,\ell^{-1}) \\
\grx{\lambda_1}{n}{\epsilon_0} & \grx{\lambda_1}{n}{\epsilon_1} & \cdots & \grx{\lambda_1}{n}{\epsilon_r} \\
\vdots & \vdots && \vdots\\
\grx{\lambda_r}{n}{\epsilon_0} & \grx{\lambda_r}{n}{\epsilon_1} & \cdots & \grx{\lambda_r}{n}{\epsilon_r}
\end{array}
\right|
$$
since $(\epsilon_r)_{\ell,\tr} = (\ell,\ell^{-1})^{\ord_{\lambda_r}(\epsilon_r)}$, 
and $(\epsilon_i)_{\ell,\tr} = 1$ for $i < r$. 
(Recall that when we evaluate these determinants using \eqref{minors}, the multiplicative notation 
in $(\hat{F}^\times_\ell)_\tr$ changes to additive notation in the tensor product
$(\hat{F}^\times_\ell)_\tr \otimes I_\ell^r/I_\ell^{r+1}$, so $1$'s in the 
top row become $0$'s, and $(\ell,\ell^{-1})^{\ord_{\lambda_r}(\epsilon_r)}$ becomes 
${\ord_{\lambda_r}(\epsilon_r)}\cdot(\ell,\ell^{-1})$.)
We have $\ord_{\lambda_r}(\epsilon_r) = -h_{n/\ell}/h_n$ by Lemma \ref{cnr}.  
For $i < r$ we have $\ord_{\lambda_r}(\epsilon_i) = 0$, so 
$\grx{\lambda_r}{n}{\epsilon_i} = \grx{\lambda_r}{\ell}{\epsilon_i} \in I_\ell/I_\ell^2$ and
$$
\phifs_\ell((\epsilon_i)_{\ell}) = (\ell,\ell^{-1}) \otimes \grx{\lambda_r}{n}{\epsilon_i}
   \in (\hat{F}^\times_\ell)_\tr \otimes I_\ell/I_\ell^2.
$$ 
Thus 
\begin{align*}
(R_n)_{\ell,\tr} &= -\frac{h_{n/\ell}}{h_n}(-1)^{r}(-1)^{r-1}\left|
\begin{array}{ccccccc}
\phifs_\ell((\epsilon_0)_{\ell}) & \cdots & \phifs_\ell((\epsilon_{r-1})_{\ell}) \\
\grx{\lambda_1}{n}{\epsilon_0} & \cdots & \grx{\lambda_1}{n}{\epsilon_{r-1}} \\
\vdots & \vdots & \vdots\\
\grx{\lambda_{r-1}}{n}{\epsilon_0} & \cdots & \grx{\lambda_{r-1}}{n}{\epsilon_{r-1}}
\end{array}
\right|\\[7pt]
   &= \frac{h_{n/\ell}}{h_n} \left|
\begin{array}{ccccccc}
\phifs_\ell((\epsilon_0)_{\ell}) & \cdots & \phifs_\ell((\epsilon_{r-1})_{\ell}) \\
\grx{\lambda_1}{n/\ell}{\epsilon_0}+\grx{\lambda_1}{\ell}{\epsilon_0} & \cdots 
   & \grx{\lambda_1}{n/\ell}{\epsilon_{r-1}}+\grx{\lambda_1}{\ell}{\epsilon_{r-1}} \\
\vdots & \vdots & \vdots\\
\grx{\lambda_{r-1}}{n/\ell}{\epsilon_0}+\grx{\lambda_{r-1}}{\ell}{\epsilon_0} & \cdots 
   & \grx{\lambda_{r-1}}{n/\ell}{\epsilon_{r-1}}+\grx{\lambda_{r-1}}{\ell}{\epsilon_{r-1}} 
\end{array}
\right|
\end{align*}
(the $(-1)^r$ because we moved column $r+1$ to column $1$, and the  $(-1)^{r-1}$ 
because we moved row $r+1$ to row $2$).
When we expand the last determinant (including expanding the sums 
$\grx{\lambda_{j}}{n/\ell}{\epsilon_{i}}+\grx{\lambda_{j}}{\ell}{\epsilon_{i}}$), 
each term that 
includes one of the $\grx{\lambda_j}{\ell}{\epsilon_i}$ lies in $I_\ell^2$ 
(since the top row also contributes one element of $I_\ell$).  Thus all such terms 
project to zero in $\In$, and so 
$$
\projI{n}{(R_n)_{\ell,\tr}} = \frac{h_{n/\ell}}{h_n} ~\projI{n}{\det(A)}
$$
where
$$
A = \left[
\begin{array}{ccccccc}
\phifs_\ell((\epsilon_0)_{\ell}) & \cdots & \phifs_\ell((\epsilon_{r-1})_{\ell}) \\
\grx{\lambda_1}{n/\ell}{\epsilon_0} & \cdots & \grx{\lambda_1}{n/\ell}{\epsilon_{r-1}} \\
\vdots & \vdots & \vdots\\
\grx{\lambda_{r-1}}{n/\ell}{\epsilon_0} & \cdots & \grx{\lambda_{r-1}}{n/\ell}{\epsilon_{r-1}}
\end{array}
\right].
$$
But then $\det(A) = (\phifs_\ell \otimes 1)((R_{n/\ell})_\ell)$, so the proposition follows.
\end{proof}

Suppose $n, n' \in \N$, $n \mid n'$, and $r = r(n)$.  
Define
$$
\bR{n}{n'} := \left|
\begin{array}{ccccccc}
\epsilon_0 & \epsilon_1 & \cdots & \epsilon_r \\
\grx{\lambda_1}{n'}{\epsilon_0} & \grx{\lambda_1}{n'}{\epsilon_1} & \cdots & \grx{\lambda_1}{n'}{\epsilon_r} \\
\vdots & \vdots && \vdots\\
\grx{\lambda_r}{n'}{\epsilon_0} & \grx{\lambda_r}{n'}{\epsilon_1} & \cdots & \grx{\lambda_r}{n'}{\epsilon_r}
\end{array}
\right| \in \E_n^- \otimes I_{n'}^r/I_{n'}^{r+1},
$$
using any standard basis of $X_n^-$ and oriented basis of $(1-\tau)\E_n$.
In particular $\bR{n}{n} = R_n$.

\begin{prop}
\label{indstep}
Suppose $n \in \N$ and $\ell \nmid n$.
\begin{enumerate}
\item
If $\ell$ is inert in $F/\Q$, then $h_{n\ell}\projI{n}{R_{n\ell}} = h_n\projI{n}{R_n}$.
\item
If $\ell$ splits in $F/\Q$ and $v \in I_n$, then
$$
h_{n}\projI{n\ell}{\bR{n}{n\ell}\,v} = \projI{n}{R_n}\pi_\ell(v) 
   - \sum_{\mathrm{primes}~ q \mid n_+}h_{n/q}\projI{n}{\bR{n/q}{n}\,v}\pi_{\ell}(\Frob_q-1)
$$
in $\E_n^- \otimes \I_{n\ell}$.
\end{enumerate}
\end{prop}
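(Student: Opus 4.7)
The plan is to treat the two cases separately, in both via the multilinear expansion of the regulator determinant induced by the decomposition $\grx{\lambda_i}{n\ell}{\epsilon_j} = \grx{\lambda_i}{n}{\epsilon_j} + \grx{\lambda_i}{\ell}{\epsilon_j}$.

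For (i), since $\ell$ is inert we have $(n\ell)_+ = n_+$, so the regulator matrices have the same size; and the new unit $\ell \in \E_{n\ell}$ is $\tau$-invariant, giving $(1-\tau)\E_{n\ell} = (1-\tau)\E_n$. The same standard basis of $X_n^-$ and oriented basis of $(1-\tau)\E_n$ therefore serve for both regulators. The unique prime above $\ell$ in $F$ is $(\ell)$, already principal, so the exact sequence relating $\Pic(\O_F[1/n])$ and $\Pic(\O_F[1/(n\ell)])$ via the class of that prime forces $h_{n\ell}=h_n$. Each entry of the $R_{n\ell}$-matrix differs from the corresponding $R_n$-entry by a term in $I_\ell/I_\ell^2$, and since $\ell \nmid (n\ell)_+ = n_+$, any product of entries containing an $I_\ell$-factor is annihilated by the projection to $\In \subset I_{n\ell}^r/I_{n\ell}^{r+1}$. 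Hence $\projI{n}{R_{n\ell}} = \projI{n}{R_n}$, and (i) follows.

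For (ii), fix a standard basis of $X_n^-$ and an oriented basis $\{\epsilon_0,\ldots,\epsilon_r\}$ of $(1-\tau)\E_n$. Since $F_{\lambda_i}(\bmu_\ell)/F_{\lambda_i}$ is unramified for each $i \geq 1$, $\grx{\lambda_i}{\ell}{\epsilon_j} = \ord_{\lambda_i}(\epsilon_j)(\Frob_{\ell_i}-1) \in I_\ell/I_\ell^2$. Expanding $\bR{n}{n\ell}$ multilinearly over the choice of the $n$- or $\ell$-version in each of the bottom $r$ rows, write $\bR{n}{n\ell} = \sum_{S \subseteq \{1,\ldots,r\}} \bR{n}{n\ell}^{(S)}$. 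Multiplying by $v$ and projecting to $\I_{n\ell}$: the terms with $|S| \geq 2$ involve $I_\ell^2$ and vanish; the $S=\emptyset$ term contributes $\projI{n\ell}{R_n\,v}$, which for $v \in I_n$ is zero because $R_n v$ has no $I_\ell$-component; and only singleton contributions survive. For $S = \{i\}$, factor $(\Frob_{\ell_i}-1)$ out of row $i$ to write $\bR{n}{n\ell}^{(\{i\})} = (\Frob_{\ell_i}-1)\,D_i$, where $D_i$ has integer entries $\ord_{\lambda_i}(\epsilon_j)$ in its $i$-th row. Then, by the lemma following Proposition \ref{crit}, $\projI{n\ell}{\bR{n}{n\ell}^{(\{i\})}\,v} = \pi_\ell(\Frob_{\ell_i}-1)\,\projI{n}{D_i\,v}$.

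The crux is the identity
$$\projI{n}{D_i\,v} = -\frac{h_{n/\ell_i}}{h_n}\,\projI{n}{\bR{n/\ell_i}{n}\,v}$$
in $\E_n^- \otimes \I_n$. Set $q=\ell_i$ and invoke Lemma \ref{cnr}: after relabeling so that $\lambda_i = \lambda_r$, choose $\{\epsilon_0,\ldots,\epsilon_r\}$ with $\{\epsilon_0,\ldots,\epsilon_{r-1}\}$ an oriented basis of $(1-\tau)\E_{n/q}$. Then $\ord_{\lambda_r}(\epsilon_j) = 0$ for $j<r$ and $\ord_{\lambda_r}(\epsilon_r) = -h_{n/q}/h_n$, so expanding $D_r$ along its $\ord_{\lambda_r}$-row (which has a single nonzero entry) yields $D_r = -\frac{h_{n/q}}{h_n}\bR{n/q}{n}$ in that adapted basis. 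Since both sides of the crux identity are independent of the basis of $(1-\tau)\E_n$ given fixed orientations, the identity persists in general. Multiplying by $h_n$ and summing over $i$ gives the formula in (ii) (the stated first term $\projI{n}{R_n}\pi_\ell(v)$ is zero on $I_n$ since $\pi_\ell$ annihilates $I_n$).

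The main obstacle is the crux identity. The delicacy is that $D_i$ lives naturally with a basis of $(1-\tau)\E_n$ while $\bR{n/\ell_i}{n}$ is built from a basis of $(1-\tau)\E_{n/\ell_i}$, and the class-number ratio $h_{n/\ell_i}/h_n$ emerges only after passing to the adapted basis of Lemma \ref{cnr}; basis-independence of both sides then promotes the equality to arbitrary bases.
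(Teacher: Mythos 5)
Your proof is correct and follows essentially the same route as the paper: both parts rest on the multilinear expansion of $\bR{n}{n\ell}$ over the decomposition $\grx{\lambda_j}{n\ell}{\cdot}=\grx{\lambda_j}{n}{\cdot}+\grx{\lambda_j}{\ell}{\cdot}$, with terms involving $I_\ell^2$ killed by projection to $\I_{n\ell}$, and the singleton contribution evaluated by choosing a basis of $(1-\tau)\E_n$ adapted (via Lemma~\ref{cnr}) to the distinguished prime. The only place where you are a bit cavalier is the passage ``relabel so that $\lambda_i=\lambda_r$ \dots\ the identity persists by basis-independence'': relabeling permutes the standard basis of $X_n^-$ as well, which a priori changes both $D_i$ and $\bR{n/\ell_i}{n}$; the paper handles this instead by keeping $\lambda_i$ in place and tracking the signs $(-1)^{r+i}$ and $(-1)^{r+i+1}$ (from the Laplace expansion and from $\ord_{\lambda_i}(\epsilon_r)$, respectively) until they cancel, which is the content your basis-independence remark silently relies on.
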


\begin{proof}
Let $r$ be the number of prime divisors of $n_+$, so $X_{n}^-$ and 
$(1-\tau)\E_{n}$ are free $\Z$-modules of rank $r+1$.
Choose a standard basis of $X_{n}^-$ and an oriented basis of $(1-\tau)\E_{n}$.  
For $1 \le i \le r = r(n)$, let 
$$
a_i = (\grx{\lambda_i}{n}{\epsilon_0}, \grx{\lambda_i}{n}{\epsilon_1}, \ldots, 
   \grx{\lambda_i}{n}{\epsilon_r}), 
\quad 
b_i = (\grx{\lambda_i}{\ell}{\epsilon_0}, \grx{\lambda_i}{\ell}{\epsilon_1}, \ldots, 
   \grx{\lambda_i}{\ell}{\epsilon_r}).
$$
Then
\begin{equation}
\label{5}
\bR{n}{n\ell} = \left|
\begin{array}{ccccccc}
\epsilon_0 & \cdots & \epsilon_r \\
&a_1 + b_1 \\
& \vdots \\
& a_r+ b_r
\end{array}
\right|
=
\sum_{T \subset \{1,\ldots,r\}} \det(A_T)
\end{equation}
where $A_T$ is the matrix whose top row is $(\epsilon_0, \ldots, \epsilon_r)$ and 
whose $(i+1)$-st row for $1 \le i \le r$ is $b_i$ if $i \in T$ and $a_i$ if $i \notin T$.  
Note that $\det(A_\emptyset) = R_{n}$, and that 
the entries of each $b_i$ are in $I_\ell/I_\ell^2$.

Suppose first that $\ell$ is inert in $F/\Q$, so $(n\ell)_+ = n_+$.  
Then $\projI{n}{\det(A_T)} = 0$ 
if $T$ is nonempty (since $\In$ has no ``$\ell$ component''), so \eqref{5} 
shows that
$$
\projI{n}{\bR{n}{n\ell}} = \projI{n}{\det(A_\emptyset)} = \projI{n}{R_{n}}.
$$
Further, since $\ell$ is inert in $F/\Q$ we have $X_{n\ell}^- = X_{n}^-$, 
$(1-\tau)\E_{n\ell} = (1-\tau)\E_{n}$, and $h_{n\ell} = h_{n}$.  Thus 
$\bR{n}{n\ell} = R_{n\ell}$, and so 
$$
h_{n\ell} \projI{(n\ell)}{R_{n\ell}} = h_n \projI{n}{\bR{n}{n\ell}} 
   = h_{n} \projI{n}{R_{n}}.
$$
This is (i).

Now suppose that $\ell$ splits in $F/\Q$.
Since the entries of each $b_i$ are in $I_\ell$, if $\#(T) \ge 2$ we have 
$\projI{n\ell}{\det(A_T)v} = 0$.  Thus \eqref{5} gives
\begin{equation}
\label{3}
\projI{n\ell}{\bR{n}{n\ell}\,v} = \projI{n\ell}{\det(A_\emptyset)v} 
   + \sum_{i=1}^r \projI{n\ell}{\det(A_{\{i\}})v}.
\end{equation}
By definition of $R_n$, 
\begin{equation}
\label{4}
\projI{n\ell}{\det(A_\emptyset)v} = \projI{n\ell}{R_{n}\,v} 
   = \projI{n}{R_{n}}\pi_\ell(v).
\end{equation} 

To compute $\det(A_{\{i\}})$, 
let $q = \lambda_i\lambda_i^\tau$, and assume that our oriented basis of 
$(1-\tau)\E_{n}$ was chosen so that 
$\{\epsilon_0,\ldots,\epsilon_{r-1}\}$ is an oriented basis of $(1-\tau)\E_{n/q}$ 
with respect to the standard basis of $X_{n/q}$ obtained by removing 
$\lambda_i-\lambda_i^\tau$ from 
$\{\lambda_1-\lambda_1^\tau,\ldots,\lambda_r-\lambda_r^\tau\}$.
For $1 \le j \le r-1$, $\epsilon_j$ is a unit at $\lambda_i$, so 
$\grx{\lambda_i}{\ell}{\epsilon_j} = 0$.  Thus 
\begin{multline*}
\det(A_{\{i\}}) = \left|
\begin{array}{ccccccc}
\epsilon_0 & \cdots & \epsilon_{r-1} & \epsilon_r \\
\grx{\lambda_1}{n}{\epsilon_0} & \cdots & \grx{\lambda_1}{n}{\epsilon_{r-1}} & \grx{\lambda_1}{n}{\epsilon_r} \\
\vdots && \vdots & \vdots\\
0 & \cdots & 0 & \grx{\lambda_i}{\ell}{\epsilon_r} \\
\vdots && \vdots & \vdots \\
\grx{\lambda_r}{n}{\epsilon_0} & \cdots & \grx{\lambda_r}{n}{\epsilon_{r-1}} & \grx{\lambda_r}{n}{\epsilon_r}
\end{array}
\right| 
= (-1)^{r+i} \left|
\begin{array}{ccccccc}
\epsilon_0 & \cdots & \epsilon_{r-1} \\
\grx{\lambda_1}{n}{\epsilon_0} & \cdots & \grx{\lambda_1}{n}{\epsilon_{r-1}} \\
\vdots & \vdots & \vdots\\
\grx{\lambda_{r}}{n}{\epsilon_0} & \cdots & \grx{\lambda_{r}}{n}{\epsilon_{r-1}}
\end{array}
\right| \grx{\lambda_i}{\ell}{\epsilon_r} \\
= (-1)^{r+i} \bR{n/q}{n}\,\grx{\lambda_i}{\ell}{\epsilon_r}
\end{multline*}
(where the second determinant has no $\lambda_i$ row).  Further, an argument 
identical to that of Lemma \ref{cnr} shows that
$$
\grx{\lambda_i}{\ell}{\epsilon_r} = (-1)^{r+i+1}\frac{h_{n/q}}{h_{n}}\,\pi_\ell(\Frob_q-1)
    \in I_\ell/I_\ell^2. 
$$
Therefore
$$
\det(A_{\{i\}}) = -\frac{h_{n/q}}{h_{n}} \bR{n/q}{n}\,\pi_\ell(\Frob_q-1).
$$
Multiplying \eqref{3} by $h_{n}$ and using \eqref{4} gives
$$
h_{n}\projI{n\ell}{\bR{n}{n\ell}\,v} = h_n\projI{n}{R_{n}}\,\pi_\ell(v) 
   - \sum_{q \mid n_+}h_{n/q}\projI{n\ell}{\bR{n/q}{n}\,v\,\pi_\ell(\Frob_q-1)}.
$$
Since $\bR{n/q}{n} \in I_{n}^r/I_{n}^{r+1}$, we have
$$
\projI{n\ell}{\bR{n/q}{n}\,v\,\pi_\ell(\Frob_q-1)} = \projI{n}{\bR{n/q}{n}\,\pi_n(v)}\,\pi_\ell(\Frob_q-1).
$$
This completes the proof of the proposition.
\end{proof}

If $n \in \N$, recall (Definition \ref{symmdef}) 
that $\symm(n)$ denotes the set of permutations of the primes 
dividing $n_+$, $\symm_1(n) \subset \symm(n)$ is the subset
$$
\symm_1(n) := \{\sigma \in \symm(n) : 
    \text{the primes not fixed by $\sigma$ form a single $\sigma$-orbit}\},
$$
and if $\sigma \in \symm(n)$ then $d_\sigma := \prod_{\sigma(\ell) \ne \ell} \ell$ 
and $\Pi(\sigma) := \prod_{q \mid d_\sigma} \pi_q(\Frob_{\sigma(q)}-1)$.

\begin{thm}
\label{finitepart}
If $n \in\N_p$ and $\ell \mid n_+$, then
$$
\projI{n}{h_n(R_{n})_{\ell,\f}} = 
    -\sum_{\pile{\sigma\in\symm_1(n)}{\sigma(\ell) \ne \ell}} 
        \sign(\sigma) \projI{n/d_\sigma}{h_{n/d_\sigma}(R_{n/d_\sigma})_{\ell}} 
        \Pi(\sigma).
$$
\end{thm}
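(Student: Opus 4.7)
Our strategy is to expand the finite part $(R_n)_{\ell,\f}$ as a determinant, project it to $\In$, and identify the resulting combinatorial sum with the right-hand side by iterating Proposition \ref{indstep}(ii). By Lemma \ref{finlem}, each $(R_{n/d_\sigma})_\ell$ on the right lies automatically in the finite part (since $\ell\nmid n/d_\sigma$), so both sides live in $(\hat{F}_{\ell,\f}^\times)^-\otimes\In$.

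Set $m = n/\ell$ and choose bases as in Lemma \ref{cnr}: a standard basis of $X_n^-$ with $\lambda_r\lambda_r^\tau=\ell$ and an oriented basis $\{\epsilon_0,\ldots,\epsilon_r\}$ of $(1-\tau)\E_n$ whose restriction to $\{\epsilon_0,\ldots,\epsilon_{r-1}\}$ is an oriented basis of $(1-\tau)\E_m$, with $\ord_{\lambda_r}(\epsilon_r)=-h_m/h_n$. Then $(\epsilon_j)_{\ell,\f}=(\epsilon_j)_\ell$ for $j<r$, while $(\epsilon_r)_{\ell,\f}$ differs from $(\epsilon_r)_\ell$ exactly by the transverse contribution already treated in Proposition \ref{fsprop}. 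Expanding $R_n$ along the top row and splitting each matrix entry $\grx{\lambda_i}{n}{\epsilon_k}=\grx{\lambda_i}{m}{\epsilon_k}+\grx{\lambda_i}{\ell}{\epsilon_k}$ turns $(R_n)_{\ell,\f}$ into a signed sum of products.

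For the projection to $\In$, across the $r$ rows of each $r\times r$ minor, each prime of $n_+$ must appear as a contributing $I_{\ell'}/I_{\ell'}^2$-component in exactly one row, giving a bijection between rows and primes of $n_+$. Organizing by the cycle decomposition of these bijections, fixed points produce regulator-shaped factors $h_{n/d}R_{n/d}$ and non-identity cycles produce cyclic products $\pi_q(\Frob_{q'}-1)$ matching $\Pi(\sigma)$. Proposition \ref{indstep}(ii) is precisely the one-step shift identity realizing such a move, and iterating it yields the full cycle sum on the right with the correct signs. The constraint $\sigma(\ell)\ne\ell$ in $\symm_1(n)$ reflects that replacing $(\epsilon_r)_\ell$ with $(\epsilon_r)_{\ell,\f}$ removes precisely the term that would keep $\ell$ as a fixed point (i.e., the transverse part of Proposition \ref{fsprop}), forcing $\ell$ to lie in a non-trivial cycle.

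For the base case $r(n)=1$ (so $n_+=\ell$), the right-hand side is empty and we must verify $\projI{n}{h_n(R_n)_{\ell,\f}}=0$. Here $R_n=\epsilon_0\otimes\grx{\lambda_1}{n}{\epsilon_1}-\epsilon_1\otimes\grx{\lambda_1}{n}{\epsilon_0}$; projecting to $I_\ell/I_\ell^2$ replaces $\grx{\lambda_1}{n}{\cdot}$ with the tame local Artin symbol $\grx{\lambda_1}{\ell}{\cdot}$, and both $(\hat{F}_{\ell,\f}^\times)^-\otimes\Zp$ and $(I_\ell/I_\ell^2)\otimes\Zp$ become canonically isomorphic to $\Zp/(\ell-1)_p\Zp$. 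Via these identifications, both terms pair $\log u_0$ with $\log u_1$ (where $u_j=(\epsilon_j)_{\lambda_1}\bmod\ell$), and cancel by commutativity of multiplication in the cyclic group. The principal obstacle is the combinatorial verification in paragraph~3: carefully matching the signs arising from the cofactor expansion, from the cycle signatures, and from the factor $-h_m/h_n$ supplied by Lemma \ref{cnr}, so that they collectively reproduce the sign $\sign(\sigma)$ appearing on the right.
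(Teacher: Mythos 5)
Your proposal follows the same broad outline as the paper's proof (fix bases as in Lemma \ref{cnr}, manipulate the regulator determinant, and iterate Proposition \ref{indstep}(ii) to reach the cycle sum), but it omits the step that makes the argument actually work, and you yourself flag the remaining sign bookkeeping as an unresolved ``principal obstacle.''

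The paper's key move is not a global expansion of the determinant. Instead, it splits only the $\lambda_r$-row as $\grx{\lambda_r}{n}{\epsilon_i} = \grx{\lambda_r}{n/\ell}{\epsilon_i} + \grx{\lambda_r}{\ell}{\epsilon_i}$, obtaining $(R_n)_{\ell,\f}$ as a sum of two determinants. The first one, whose last row is $\grx{\lambda_r}{\ell}{\epsilon_i}$, vanishes outright: the map $\epsilon \mapsto \grx{\lambda_r}{\ell}{\epsilon}$ is an isomorphism $(\hat{F}_{\ell,\f}^\times)^- \isom (I_\ell/I_\ell^2)\otimes\Zp$ killing the transverse part, so applying it to the top row (which lies in the finite part) reproduces the last row, and the determinant has two dependent rows. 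This single observation reduces everything to the clean identity
$$
h_n\projI{n}{(R_n)_{\ell,\f}} = -h_{n/\ell}\,\projI{n}{(\bR{n/\ell}{n})_{\ell}\,\pi_{n/\ell}(\Frob_\ell-1)},
$$
from which the $\symm_1(n)$ sum drops out of iterating Proposition \ref{indstep}(ii): at each step you strip off one prime of $n_+$, the iteration terminates when none remain, and the resulting terms are naturally indexed by sequences $q_1=\ell, q_2,\ldots,q_k$ of distinct primes, i.e.\ by single cycles through $\ell$, with sign $(-1)^{k-1}=\sign(\sigma)$.

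Your version never isolates this recursion. You propose a direct expansion of the full determinant after splitting every entry, organized by a bijection between rows and primes, and then claim the result matches a sum over $\symm_1(n)$. But a naive expansion of this kind produces contributions from \emph{all} permutations, including those with several nontrivial cycles, and you would have to prove separately that those contributions cancel (this is not automatic; in the paper it is Lemma \ref{5.8} and the derangement combinatorics that deal with such cancellations, and they are nontrivial). Your sentence about replacing $(\epsilon_r)_\ell$ by $(\epsilon_r)_{\ell,\f}$ ``removing the term that would keep $\ell$ a fixed point'' gestures at the correct idea — your base case argument for $r=1$ is essentially the dependent-rows observation in miniature — but stated only at $r=1$ it does not give you the equation that makes the general induction go. Without that, the sign-matching you identify as the ``principal obstacle'' is not an afterthought but the missing core of the proof. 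To repair the argument, establish the vanishing of the $\ell$-only determinant in general, derive the displayed recursion above, and then carry out the iteration of Proposition \ref{indstep}(ii) — that is the argument the paper actually gives.
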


\begin{proof}
As usual, fix a basis
$\{\lambda_0-\lambda_0^\tau,\ldots,\lambda_r-\lambda_r^\tau\}$ of $X_n^-$ 
with $\ell = \lambda_r\lambda_r^\tau$, and 
an oriented basis $\{\epsilon_0,\ldots,\epsilon_r\}$ of $(1-\tau)\E_n$ as in Lemma \ref{cnr}, 
so that $\{\epsilon_0,\ldots,\epsilon_{r-1}\}$ is an oriented basis of $(1-\tau)\E_{n/\ell}$.  
Then
$$
(R_n)_{\ell,f} = \left|
\begin{array}{ccccccc}
(\epsilon_0)_{\ell,\f} & \cdots & (\epsilon_{r-1})_{\ell,\f} & (\epsilon_r)_{\ell,\f} \\
\grx{\lambda_1}{n}{\epsilon_0} & \cdots & \grx{\lambda_1}{n}{\epsilon_{r-1}} & \grx{\lambda_1}{n}{\epsilon_r} \\
\vdots && \vdots & \vdots\\
\grx{\lambda_r}{n}{\epsilon_0} & \cdots & \grx{\lambda_r}{n}{\epsilon_{r-1}} & \grx{\lambda_r}{n}{\epsilon_r}
\end{array}
\right|.
$$
For each $i$, we have 
$\grx{\lambda_r}{n}{\epsilon_i} = \grx{\lambda_r}{n/\ell}{\epsilon_i} + \grx{\lambda_r}{\ell}{\epsilon_i}$.
If $i < r$, then $\epsilon_i$ is a unit at $\lambda_r$ so 
$\grx{\lambda_r}{n/\ell}{\epsilon_i} = 0$.  Thus
$$
(R_n)_{\ell,f} = \left|
\begin{array}{ccccccc}
(\epsilon_0)_{\ell,\f} & \cdots & (\epsilon_r)_{\ell,\f} \\
\grx{\lambda_1}{n}{\epsilon_0} & \cdots & \grx{\lambda_1}{n}{\epsilon_r} \\
\vdots && \vdots\\
\grx{\lambda_r}{\ell}{\epsilon_0} & \cdots & \grx{\lambda_r}{\ell}{\epsilon_r}
\end{array}
\right| + 
\left|
\begin{array}{ccccccc}
(\epsilon_0)_{\ell,\f} & \cdots & (\epsilon_{r-1})_{\ell,\f} & (\epsilon_r)_{\ell,\f} \\
\grx{\lambda_1}{n}{\epsilon_0} & \cdots & \grx{\lambda_1}{n}{\epsilon_{r-1}} & \grx{\lambda_1}{n}{\epsilon_r} \\
\vdots && \vdots & \vdots\\
0 & \cdots & 0 & \grx{\lambda_r}{n/\ell}{\epsilon_r}
\end{array}
\right|.
$$
The map 
$\epsilon \mapsto \grx{\lambda_r}{\ell}{\epsilon} = [\epsilon,F_{\lambda_r}(\bmu_\ell)/F_{\lambda_r}]-1$ is an isomorphism from 
$(\hat{F}_{\ell,\f}^\times)^- = (\hat{\O}_\ell^\times)^-$ to $(I_\ell/I_\ell^2) \otimes \Zp$, and is 
zero on $(\hat{F}_{\ell,\tr}^\times)^-$ because $\ell$ is a norm in the extension 
$F_{\lambda_r}(\bmu_\ell)/F_{\lambda_r} = \Ql(\bmu_\ell)/\Ql$.  Hence the first 
determinant in the equation above is zero, because the top and bottom rows are 
linearly dependent.  Also, if  $i < r$ then  $\epsilon_i$ is a unit at $\lambda_r$, 
so $(\epsilon_i)_{\ell,\f} = (\epsilon_i)_{\ell}$ and
$$
(R_n)_{\ell,\f} = \left|
\begin{array}{ccccccc}
(\epsilon_0)_{\ell} & \cdots & (\epsilon_{r-1})_{\ell} \\
\grx{\lambda_1}{n}{\epsilon_0} & \cdots & \grx{\lambda_1}{n}{\epsilon_{r-1}} \\
\vdots && \vdots\\
\grx{\lambda_{r-1}}{n}{\epsilon_0} & \cdots & \grx{\lambda_{r-1}}{n}{\epsilon_{r-1}}
\end{array}
\right|\grx{\lambda_r}{n/\ell}{\epsilon_r} 
   = (\bR{n/\ell}{n})_\ell\,\grx{\lambda_r}{n/\ell}{\epsilon_r}.
$$
By Lemma \ref{cnr}, $\grx{\lambda_r}{n/\ell}{\epsilon_r} = -(h_{n/\ell}/h_n)\,\pi_{n/\ell}(\Frob_\ell-1)$.   
Thus
\begin{equation}
\label{1}
h_n\projI{n}{(R_n)_{\ell,\f}} = -h_{n/\ell}\projI{n}{(\bR{n/\ell}{n})_{\ell}\,\pi_{n/\ell}(\Frob_\ell-1)}.
\end{equation}
We can now ``simplify'' \eqref{1} by inductively expanding the right-hand side 
using Proposition \ref{indstep}.  Specifically, expand 
$\projI{n}{(\bR{n/\ell}{n}\,\pi_{n/\ell}(\Frob_\ell-1)}$ 
using Proposition \ref{indstep}(ii).  Then expand each of the resulting 
$\projI{n/\ell}{(\bR{n/(\ell q)}{n/\ell}\,\pi_{n/(q\ell)}(\Frob_q-1)}$ 
using Proposition \ref{indstep}(ii) again.  Continue until no terms $\bR{m/q}{m}$ 
remain.  The resulting sum consists of one term 
$$
(-1)^k \projI{n/(q_1\cdots q_k)}{h_{n/(q_1\cdots q_k)}(R_{n/(q_1\cdots q_k)})_\ell} \prod_{i=1}^k \pi_{q_i}(\Frob_{q_{i+1}}-1)
$$
for each sequence $q_1 = \ell,q_2,\ldots,q_k$ of distinct primes dividing $n_+$ (with 
$q_{k+1} = \ell$).  Identifying this sequence with the $k$-cycle 
$\sigma := (\ell,q_2,\ldots, q_k) \in \symm_1(n)$ gives the formula of the theorem, 
since $\sign(\sigma) = (-1)^{k-1}$.
\end{proof}

\begin{thm}
\label{Rpreks}
The collection $\{h_nR_n : n \in \N_p\}$ is a pre-Kolyvagin system.
\end{thm}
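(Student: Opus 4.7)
The plan is simply to verify the five conditions of Definition \ref{preksdef} for $\kappa_n := h_n R_n$, observing that almost every condition has been prepared for us by a previous result in this section. Since by Remark \ref{5.9} we may replace (iv) by (iv)$'$, what needs to be checked is:
\begin{enumerate}
\item $(h_n R_n)_\ell \in (\hat F^\times_{\ell,\f})^- \otimes I_n^r/I_n^{r+1}$ when $\ell \nmid n$;
\item $(1\otimes\pi_{n/\ell})(h_n R_n) = h_{n/\ell}R_{n/\ell}\,\pi_{n/\ell}(1-\Frob_\ell)$ when $\ell \mid n_+$;
\item $\projI{n}{h_n(R_n)_{\ell,\tr}} = (\phifs_\ell\otimes1)(\projI{n/\ell}{h_{n/\ell}(R_{n/\ell})_\ell})$ when $\ell \mid n_+$;
\item[(iv)$'$] the identity of Theorem \ref{finitepart};
\item[(v)] $\projI{n}{h_n R_n} = \projI{n/\ell}{h_{n/\ell}R_{n/\ell}}$ when $\ell \mid n/n_+$.
\end{enumerate}

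I would then simply cite: (i) is Lemma \ref{finlem}; (ii) is Proposition \ref{regreg1lem}; (iii) is Proposition \ref{fsprop}; (iv)$'$ is Theorem \ref{finitepart}; and (v) is Proposition \ref{indstep}(i), taking into account that when $\ell \mid n/n_+$ the prime $\ell$ is inert in $F/\Q$ and $(n)_+ = (n/\ell)_+$, so that $\In = \I_{n/\ell}$ under the natural identification, and the equality $h_{n\ell}\projI{n}{R_{n\ell}} = h_n\projI{n}{R_n}$ of Proposition \ref{indstep}(i) (applied with $n$ replaced by $n/\ell$) gives exactly what is needed.

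There is no real obstacle; all the analytic and arithmetic work has been carried out in Propositions \ref{regreg1lem}, \ref{fsprop}, \ref{indstep} and Lemma \ref{finlem}, and the combinatorial identification between property (iv) and the symmetric-group identity of Theorem \ref{finitepart} is supplied by Lemma \ref{5.8}. Thus the proof is purely a matter of assembling these pieces, one per axiom.
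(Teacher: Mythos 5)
Your proposal matches the paper's proof exactly: both verify properties (i)--(v) of Definition \ref{preksdef} by citing Lemma \ref{finlem}, Proposition \ref{regreg1lem}, Proposition \ref{fsprop}, Theorem \ref{finitepart} (via Remark \ref{5.9}), and Proposition \ref{indstep}(i), respectively. The extra sentence you add explaining why Proposition \ref{indstep}(i) gives property (v) when $\ell$ is inert is a harmless and correct elaboration of what the paper leaves implicit.
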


\begin{proof}
We need to check the five properties of Definition \ref{preksdef}.  
Property (i) is Lemma \ref{finlem}, (ii) is Proposition \ref{regreg1lem}, 
(iii) is Proposition \ref{fsprop}, (iv) is Theorem \ref{finitepart} along with 
of Remark \ref{5.9}, and (v) is Proposition \ref{indstep}(i).
\end{proof}

\section{Proof of Theorem \ref{mainthm}}
\label{pfsect}

\begin{proof}[Proof of Theorem \ref{mainthm}]
Fix an odd prime $p$.  
By Theorems \ref{thetapreks} and \ref{Rpreks}, we have pre-Kolyvagin systems
$$
\{2^{-s(n)}\tilde\theta_n : n \in \N_p\}, \quad \{-h_nR_n : n \in \N_p\}.
$$
By Proposition \ref{n=1}, $\tilde\theta'_1 = -h_1R_1$ in $\O_F^\times/\{\pm1\}$.
Hence by Corollary \ref{prekskscor}, 
\begin{equation}
\label{6}
2^{-s(n)}\tilde\theta_n = -h_nR_n 
   \quad\text{in $(F^\times)^- \otimes \In \otimes \Zp$ for every $n \in \N_p$}.
\end{equation}
If $p \mid n \in \N$, then Proposition \ref{crit}(iv) shows that 
$(p-1)\In = 0$.  Therefore $(F^\times)^- \otimes \In \otimes \Zp = 0$ and 
\eqref{6} holds vacuously in this case.  
Since \eqref{6} holds for every $n \in \N$ and every odd prime $p$, 
this completes the proof of Theorem \ref{mainthm}.
\end{proof}

\end{document}